\theoremstyle{definition}
\newtheorem{dfntn}{Definition}[section]
\newtheorem{lmm}[dfntn]{Lemma}
\newtheorem{prpstn}[dfntn]{Proposition}
\newtheorem{thrm}[dfntn]{Theorem}
\newtheorem{xmpl}[dfntn]{Example}
\newtheorem{crllr}[dfntn]{Corollary}
\newtheorem{rmrk}[dfntn]{Remark}
\newtheorem{prblm}[dfntn]{Problem}
\newcommand{\N}{\mathbb{N}}
\newcommand{\R}{\mathbb{R}}
\newcommand{\Z}{\mathbb{Z}}
\newcommand{\abs}[1]{\vert #1 \vert} 
\DeclareMathOperator{\config}{config} 
\DeclareMathOperator{\real}{real} 
\DeclareMathOperator{\integ}{int} 
\DeclareMathOperator{\cyl}{Cyl} 
\DeclareMathOperator{\tr}{Tr} 
\begin{document}

\title{Cellular Automata and Powers of $p/q$ \thanks{The work was partially supported by the Academy of Finland grant 296018 and by the Vilho, Yrjö and Kalle Väisälä Foundation}}

\author{Jarkko Kari}
\author{Johan Kopra}

\affil{Department of Mathematics and Statistics, \\FI-20014 University of Turku, Finland}
\affil{jkari@utu.fi, jtjkop@utu.fi}

\date{}

\maketitle

\setcounter{page}{1}

\begin{abstract} We consider one-dimensional cellular automata $F_{p,q}$ which multiply numbers by $p/q$ in base $pq$ for relatively prime integers $p$ and $q$. By studying the structure of traces with respect to $F_{p,q}$ we show that for $p\geq 2q-1$ (and then as a simple corollary for $p>q>1$) there are arbitrarily small finite unions of intervals which contain the fractional parts of the sequence $\xi(p/q)^n$, ($n=0,1,2,\dots$) for some $\xi>0$. To the other direction, by studying the measure theoretical properties of $F_{p,q}$, we show that for $p>q>1$ there are finite unions of intervals approximating the unit interval arbitrarily well which don't contain the fractional parts of the whole sequence $\xi(p/q)^n$ for any $\xi>0$. \end{abstract}

\providecommand{\keywords}[1]{\textbf{Keywords:} #1}
\noindent\keywords{distribution modulo 1, Z-numbers, cellular automata, ergodicity, strongly mixing}

\section*{Introduction}
In \cite{W} Weyl proved that for any $\alpha>1$ the sequence of numbers $\{\xi\alpha^i\}$, $i\in\N$ is uniformly distributed in the interval $[0,1)$ for almost every choice of $\xi>0$, where $\{x\}=x-\lfloor x \rfloor$ is the fractional part of $x$. In particular, $\{\{\xi\alpha^i\}\mid i\in\N\}$ is dense in $[0,1)$ for almost every $\xi>0$. However, this doesn't hold for every $\xi>0$, and it would be interesting to know what other types of distribution the set $\{\{\xi\alpha^i\}\mid i\in\N\}$ can exhibit for different choices of $\xi$.

As a special case of this problem, in \cite{M} Mahler posed the question of whether there exist so called $Z$-numbers, i.e. real numbers $\xi>0$ such that
\[\left\{\xi\left(\frac{3}{2}\right)^i\right\}\in [0,1/2)\]
for every $i\in\N$. We will work with the following generalization of the notion of $Z$-numbers: let $p>q>1$ be relatively prime integers and let $S\subseteq[0,1)$ be a finite union of intervals. Then if we denote by $Z_{p/q}(S)$ the set of real numbers $\xi>0$ such that 
\[\left\{\xi\left(\frac{p}{q}\right)^i\right\}\in S\]
for every $i\in\N$, $Z$-numbers are the elements of the set $Z_{3/2}([0,1/2))$ and Mahler's question can be reformulated as whether $Z_{3/2}([0,1/2))=\emptyset$ or not.

A natural approach to the emptiness problem of $Z_{3/2}([0,1/2))$ is to seek sets $S$ as small as possible such that $Z_{p/q}(S)\neq\emptyset$ and sets $S$ as large as possible such that $Z_{p/q}(S)=\emptyset$ (for previous results, see e.g. \cite{A,AFS,D,FLP}). In this paper we prove that for $p\geq 2q-1$ and $k>0$ there exists a union of $q^{2k}$ intervals $I_{p,q,k}$ of total length at most $(q/p)^k$ such that $Z_{p/q}(I_{p,q,k})$ is non-empty. From this it follows as a simple corollary that for $p>q$ and $\epsilon>0$ there exists a finite union of intervals $J_{p,q,\epsilon}$ of total length at most $\epsilon$ such that $Z_{p/q}(J_{p,q,\epsilon})$ is non-empty. On the other hand, for $p>q$ and $\epsilon>0$ we prove that there exists a finite union of intervals $K_{p,q,\epsilon}$ of total length at least $1-\epsilon$ such that $Z_{p/q}(K_{p,q,\epsilon})$ is empty. The proofs of emptiness and non-emptiness are based on the study of the cellular automaton $F_{p,q}$ that implements multiplication by $p/q$ in base $pq$. This cellular automaton was introduced in \cite{K2} in relation with the problem of universal pattern generation and the connection to Mahler's problem was pointed out in \cite{K}.

\section{Preliminaries}

For a finite set $A$ (an \emph{alphabet}) the set $A^{\Z}$ is called a \emph{configuration space} and its elements are called \emph{configurations}. An element $c\in A^{\Z}$ is a bi-infinite sequence and the element at position $i$ in the sequence is denoted by $c(i)$. A \emph{factor} of $c$ is any finite sequence $c(i) c(i-1)\dots c(j)$ where $i,j\in\Z$, and we interpret the sequence to be empty if $j<i$. Any finite sequence $a(1) a(2)\dots a(n)$ (also the empty sequence, which is denoted by $\lambda$) where $a(i)\in A$  is a \emph{word} over $A$. The set of all words over $A$ is denoted by $A^*$, and the set of non-empty words is $A^+=A^*\setminus\{\lambda\}$. The set of words of length $n$ is denoted by $A^n$. For a word $w\in A^*$, $\abs{w}$ denotes its length, i.e. $\abs{w}=n\iff w\in A^n$.

\begin{dfntn}Any $w\in A^+$ and $i\in\Z$ determine a \emph{cylinder}
\[\cyl_A(w,i)=\{c\in A^{\Z}\mid c(i)c(i+1)\dots c(i+\abs{w}-1)=w\}.\]
The collection of all cylinders over $A$ is
\[\mathcal{C}_A=\{\cyl_A(w,i)\mid w\in A^+,i\in\Z\}.\]
The subscript $A$ is omitted when the used alphabet is clear from the context.
\end{dfntn}

The configuration space $A^\Z$ becomes a topological space when endowed with the topology $\mathcal{T}$ generated by $\mathcal{C}$. It can be shown that this topology is metrizable, and that a set $S\subseteq A^\Z$ is compact if and only if it is closed. $A^{\Z}$ can also be endowed with measure theoretical structure: it is known that there is a unique probability space $(A^{\Z},\Sigma(\mathcal{C}),\mu)$, where $\Sigma(\mathcal{C})$ is the sigma-algebra generated by $\mathcal{C}$ and $\mu:\Sigma(\mathcal{C})\to\R$ is a measure such that $\mu(\cyl(w,i))=\abs{A}^{-\abs{w}}$ for every $\cyl(w,i)\in\mathcal{C}$. Note that $\mathcal{T}\subseteq\Sigma(\mathcal{C})$ because $\mathcal{C}$ is a countable basis of $\mathcal{T}$, so $\Sigma(\mathcal{C})$ is actually the collection of Borel sets of $A^\Z$.

\begin{sloppypar}
\begin{dfntn}A one-dimensional cellular automaton (CA) is a $3$-tuple $(A,N,f)$, where $A$ is a finite \emph{state set}, $N=(n_1,\dots,n_m)\in\Z^m$ is a \emph{neighborhood vector} and $f:A^m\to A$ is a \emph{local rule}. A given CA $(A,N,f)$ is customarily identified with a corresponding \emph{CA function} $F:A^{\Z}\to A^{\Z}$ defined by
\[F(c)(i)=f(c(i+n_1),\dots,c(i+n_m))\]
for every $c\in A^{\Z}$ and $i\in\Z$.\end{dfntn}
\end{sloppypar}

To every configuration space $A^{\Z}$ is associated a \emph{(left) shift} CA $(A,(1),\iota)$, where $\iota:A\to A$ is the identity function. Put in terms of the CA-function determined by this $3$-tuple, the left shift is $\sigma_A:A^{\Z}\to A^{\Z}$ defined by $\sigma_A(c)(i)=c(i+1)$ for every $c\in A^{\Z}$ and $i\in\Z$.

For a given CA $F:A^\Z\to A^\Z$ and a configuration $c\in A^\Z$ it is often helpful to consider a \emph{space-time diagram} of $c$ with respect to $F$. A space-time diagram is a picture which depicts elements of the sequence $(F^i(c))_{i\in\N}$ (or possibly $(F^i(c))_{i\in\Z}$ in the case when $F$ is reversible) in such a way that $F^{i+1}(c)$ is drawn below $F^i(c)$ for every $i$. As an example, Figure \ref{std} contains a space-time diagram of $c=\dots01101001\dots$ with respect to the left shift on $A=\{0,1\}$.

All CA-functions are continuous with respect to $\mathcal{T}$ and commute with the shift.

\begin{figure}[h]
\centering
\begin{tikzpicture}
\draw (0pt,60pt) -- (200pt,60pt);
\draw (0pt,40pt) -- (200pt,40pt);
\draw (0pt,20pt) -- (200pt,20pt);
\draw (0pt,0pt) -- (200pt,0pt);
\node[anchor=east] at (-5pt,50pt) {$c$};
\node at (10pt,50pt) {$\dots$}; \node at (30pt,50pt) {$0$}; \node at (50pt,50pt) {$1$}; \node at (70pt,50pt) {$1$}; \node at (90pt,50pt) {$0$};
\node at (110pt,50pt) {$1$}; \node at (130pt,50pt) {$0$}; \node at (150pt,50pt) {$0$}; \node at (170pt,50pt) {$1$}; \node at (190pt,50pt) {$\dots$};
\node[anchor=east] at (-5pt,30pt) {$\sigma_A(c)$};
\node at (10pt,30pt) {$\dots$}; \node at (30pt,30pt) {$1$}; \node at (50pt,30pt) {$1$}; \node at (70pt,30pt) {$0$}; \node at (90pt,30pt) {$1$};
\node at (110pt,30pt) {$0$}; \node at (130pt,30pt) {$0$}; \node at (150pt,30pt) {$1$};                          \node at (190pt,30pt) {$\dots$};
\node[anchor=east] at (-5pt,10pt) {$\sigma_A^2(c)$};
\node at (10pt,10pt) {$\dots$}; \node at (30pt,10pt) {$1$}; \node at (50pt,10pt) {$0$}; \node at (70pt,10pt) {$1$}; \node at (90pt,10pt) {$0$};
\node at (110pt,10pt) {$0$}; \node at (130pt,10pt) {$1$};                                                       \node at (190pt,10pt) {$\dots$};
\end{tikzpicture}
\caption{An example of a space-time diagram.}
\label{std}
\end{figure}

\section{The cellular automata $G_{p,q}$ and $F_{p,q}$}

In this section we define auxiliary CA $G_{p,q}$ for relatively prime $p,q\geq2$ and show that they multiply numbers by $p$ in base $pq$. Then we use $G_{p,q}$ in constructing the CA $F_{p,q}$ which multiply numbers by $p/q$ in base $pq$, and cover some basic properties of $F_{p,q}$.

Let us denote by $A_n$ the set of digits $\{0,1,2,\dots,n-1\}$ for $n\in\N$, $n>1$. To perform multiplication using a CA we need be able to represent a nonnegative real number as a configuration in $A_n^{\Z}$. If $\xi\geq0$ is a real number and $\xi=\sum_{i=-\infty}^{\infty}{\xi_i n^i}$ is the unique base $n$ expansion of $\xi$ such that $\xi_i\neq n-1$ for infinitely many $i<0$, we define $\config_n(\xi)\in A_n^{\Z}$ by
\[\config_n(\xi)(i)=\xi_{-i}\]
for all $i\in\Z$. In reverse, whenever $c\in A_n^{\Z}$ is such that $c(i)=0$ for all sufficiently small $i$, we define
\[\real_n(c)=\sum_{i=-\infty}^{\infty}{c(-i) n^i}.\]
For words $w=w(1)w(2)\dots w(k)\in A_n^k$ we define analogously
\[\real_n(w)=\sum_{i=1}^{k}{w(i)n^{-i}}.\]
Clearly $\real_n(\config_n(\xi))=\xi$ and $\config_n(\real_n(c))=c$ for every $\xi\geq0$ and every $c\in A_n^{\Z}$ such that $c(i)=0$ for all sufficiently small $i$ and $c(i)\neq n-1$ for infinitely many $i>0$.

For relatively prime integers $p,q\geq2$ let $g_{p,q}:A_{pq}\times A_{pq}\to A_{pq}$ be defined as follows. Digits $x,y\in A_{pq}$ are represented as $x=x_1q+x_0$ and $y=y_1q+y_0$, where $x_0,y_0\in A_q$ and $x_1,y_1\in A_p$: such representations always exist and they are unique. Then
\[g_{p,q}(x,y)=g_{p,q}(x_1q+x_0,y_1q+y_0)=x_0p+y_1.\]
An example in the particular case $(p,q)=(3,2)$ is given in Figure \ref{taul}.

\begin{figure}[h]
\centering
\begin{tabular} {c | c c c c c c}
$x\backslash y$ & 0 & 1 & 2 & 3 & 4 & 5 \\ \hline
0 & 0 & 0 & 1 & 1 & 2 & 2 \\
1 & 3 & 3 & 4 & 4 & 5 & 5 \\
2 & 0 & 0 & 1 & 1 & 2 & 2 \\
3 & 3 & 3 & 4 & 4 & 5 & 5 \\
4 & 0 & 0 & 1 & 1 & 2 & 2 \\
5 & 3 & 3 & 4 & 4 & 5 & 5 \\
\end{tabular}
\caption{The values of $g_{p,q}(x,y)$ in the case $(p,q)=(3,2)$.}
\label{taul}
\end{figure}

The CA function $G_{p,q}:A_{pq}^{\Z}\to A_{pq}^{\Z}$, $G_{p,q}(c)(i)=g_{p,q}(c(i),c(i+1))$ determined by $(A_{pq},(0,1),g_{p,q})$ implements multiplication by $p$ in base $pq$ in the sense of the following lemma.

\begin{lmm}\label{vastaavuus}$\real_{pq}(G_{p,q}(\config_{pq}(\xi)))=p\xi$ for all $\xi\geq 0$.\end{lmm}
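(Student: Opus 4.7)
The plan is to do a direct bookkeeping argument: expand $\xi$ in base $pq$, split each digit along the factorization $pq = p \cdot q$, multiply by $p$ digit-wise, and check that the resulting carries/shifts match exactly what the local rule $g_{p,q}$ does.

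First I would write $\xi = \sum_{i\in\Z} \xi_i (pq)^i$ in its canonical base-$pq$ expansion, so that $c := \config_{pq}(\xi)$ satisfies $c(j)=\xi_{-j}$. By the definition of $g_{p,q}$, each digit $\xi_i$ has a unique decomposition $\xi_i = a_i q + b_i$ with $a_i \in A_p$ and $b_i \in A_q$. This is the same decomposition the local rule performs, so reading off from $c(j) = \xi_{-j} = a_{-j}q + b_{-j}$ and $c(j+1) = a_{-j-1}q + b_{-j-1}$, the local rule outputs
\[
G_{p,q}(c)(j) \;=\; b_{-j}\,p + a_{-j-1}.
\]

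Next I would compute $p\xi$ directly from the digits. Since $p\xi_i = p(a_i q + b_i) = a_i(pq) + p b_i$, we get
\[
p\xi \;=\; \sum_{i\in\Z} \bigl(a_i (pq)^{i+1} + p b_i (pq)^i\bigr) \;=\; \sum_{i\in\Z} \bigl(p b_i + a_{i-1}\bigr)(pq)^i
\]
after re-indexing the first sum. A quick bound shows $0 \le p b_i + a_{i-1} \le p(q-1) + (p-1) = pq-1$, so each $p b_i + a_{i-1}$ is a legitimate element of $A_{pq}$ and no carries occur. Hence this is a valid (though possibly non-canonical) base-$pq$ series expansion of $p\xi$.

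Finally, setting $i = -j$, the coefficient of $(pq)^{-j}$ in this expansion is $p b_{-j} + a_{-j-1}$, which is exactly $G_{p,q}(c)(j)$. Thus $\real_{pq}(G_{p,q}(c)) = \sum_j G_{p,q}(c)(j)\,(pq)^{-j} = p\xi$, provided the sum is well-defined; this is fine because $c(j)=0$ for all sufficiently small $j$ forces $a_{-j}=b_{-j}=0$ for such $j$, and hence $G_{p,q}(c)(j)=0$ there as well. The only minor subtlety is that the series we produce for $p\xi$ might not be the canonical expansion (the one with infinitely many digits different from $pq-1$), but this is immaterial since $\real_{pq}$ is defined as the value of the series, not as a lookup on canonical expansions. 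There is no real obstacle here beyond keeping the index conventions straight.
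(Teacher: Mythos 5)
Your proposal is correct and is essentially the paper's own argument: both are direct digit-wise computations in base $pq$, splitting each digit $c(i)=c(i)_1q+c(i)_0$ exactly as the local rule $g_{p,q}$ does and re-indexing one of the two resulting sums so the series match term by term. You merely run the computation from $p\xi$ toward $\real_{pq}(G_{p,q}(c))$ instead of the reverse, and your added remarks on digit range and well-definedness of the series are harmless (and slightly more careful than the paper).
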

\begin{proof}
Let $c=\config_{pq}(\xi)$. For every $i\in\Z$, denote by $c(i)_0$ and $c(i)_1$ the natural numbers such that $0\leq c(i)_0<q$, $0\leq c(i)_1<p$ and $c(i)=c(i)_1q+c(i)_0$. Then
\begin{flalign*}
&\real_{pq}(G_{p,q}(\config_{pq}(\xi)))=\real_{pq}(G_{p,q}(c))=\sum_{i=-\infty}^{\infty}G_{p,q}(c)(-i)(pq)^i \\
=&\sum_{i=-\infty}^{\infty}g_{p,q}(c(-i),c(-i+1))(pq)^i=\sum_{i=-\infty}^{\infty}(c(-i)_0 p+c(-i+1)_1)(pq)^i \\
=&\sum_{i=-\infty}^{\infty}(c(-i)_0 p(pq)^i+c(-i+1)_1pq(pq)^{i-1}) \\
=&\sum_{i=-\infty}^{\infty}(c(-i)_0 p(pq)^i+c(-i)_1pq(pq)^i) \\
=&p\sum_{i=-\infty}^{\infty}(c(-i)_1q+c(-i)_0)(pq)^i=p\real_{pq}(c)=p\real_{pq}(\config_{pq}(\xi))=p\xi.
\end{flalign*}
\end{proof}

We also define $G_{p,q}(w)$ for words $w=w(1)w(2)\dots w(\abs{w})$ such that $\abs{w}\geq 2$:
\[G_{p,q}(w)=u=u(1)\dots u(\abs{w}-1)\in A_{pq}^{\abs{w}-1},\]
where $u(i)=g_{p,q}(w(i),w(i+1))$ for $1\leq i\leq \abs{w}-1$.
Inductively it is possible to define $G_{p,q}^t(w)$ for every $t>0$ and word $w$ such that $\abs{w}\geq t+1$:
\[G_{p,q}^t(w)=G_{p,q}(G_{p,q}^{t-1}(w))\in A_{pq}^{\abs{w}-t}.\]

Clearly the shift CA $\sigma_{A_{pq}}$ multiplies by $pq$ in base $pq$ and its inverse divides by $pq$. This combined with Lemma \ref{vastaavuus} shows that the composition $F_{p,q}=\sigma_{A_{pq}}^{-1}\circ G_{p,q}\circ G_{p,q}$ implements multiplication by $p/q$ in base $pq$. The value of $F_{p,q}(c)(i)$ is given by the local rule $f_{p,q}$ defined as follows:
\begin{flalign*}
F_{p,q}(c)(i)=&f_{p,q}(c(i-1),c(i),c(i+1)) \\
=&g_{p,q}(g_{p,q}(c(i-1),c(i)),g_{p,q}(c(i),c(i+1))).
\end{flalign*}

The CA function $F_{p,q}$ is reversible: if $c\in A_{pq}^{\Z}$ is a configuration with a finite number of non-zero coordinates, then
\begin{flalign*}
&F_{p,q}(F_{q,p}(c))=F_{p,q}(F_{q,p}(\config_{pq}(\real_{pq}(c)))) \\
\overset{L \ref{vastaavuus}}{=}&\config_{pq}((p/q)(q/p)\real_{pq}(c))=c.
\end{flalign*}
Since $F_{p,q}\circ F_{q,p}$ is continuous and agrees with the identity function on a dense set, it follows that $F_{p,q}(F_{q,p}(c))=c$ for all configurations $c\in A_{pq}^{\Z}$. We will denote the inverse of $F_{p,q}$ interchangeably by $F_{q,p}$ and $F_{p,q}^{-1}$.

As for $G_{p,q}$, we define $F_{p,q}(w)$ for words $w=w(1)w(2)\dots w(\abs{w})$ such that $\abs{w}\geq 3$:
\[F_{p,q}(w)=u=u(1)\dots u(\abs{w}-2)\in A_{pq}^{\abs{w}-2},\]
where $u(i)=f_{p,q}(w(i),w(i+1),w(i+2))$ for $1\leq i\leq \abs{w}-2$, and $F_{p,q}^t(w)$ for every $t>0$ and word $w$ such that $\abs{w}\geq 2t+1$:
\[F_{p,q}^t(w)=F_{p,q}(F_{p,q}^{t-1}(w))\in A_{pq}^{\abs{w}-2t}\]
(see an example in Figure \ref{fonw}).

\begin{figure}
\centering
\begin{tikzpicture}
\draw (0pt,80pt) -- (140pt,80pt);
\draw (0pt,60pt) -- (140pt,60pt);
\draw (20pt,40pt) -- (120pt,40pt);
\draw (40pt,20pt) -- (100pt,20pt);
\draw (60pt,0pt) -- (80pt,0pt);

\draw (0pt,80pt) -- (0pt,60pt);		\draw (140pt,80pt) -- (140pt,60pt);
\draw (20pt,60pt) -- (20pt,40pt);		\draw (120pt,60pt) -- (120pt,40pt);
\draw (40pt,40pt) -- (40pt,20pt);		\draw (100pt,40pt) -- (100pt,20pt);
\draw (60pt,20pt) -- (60pt,0pt);		\draw (80pt,20pt) -- (80pt,0pt);

\node[anchor=west] at (-40pt,70pt) {$w$};
\node at (10pt,70pt) {$3$}; \node at (30pt,70pt) {$4$}; \node at (50pt,70pt) {$3$}; \node at (70pt,70pt) {$4$}; \node at (90pt,70pt) {$2$};
\node at (110pt,70pt) {$0$}; \node at (130pt,70pt) {$5$};
\node[anchor=west] at (-40pt,50pt) {$F_{3,2}(w)$};
\node at (30pt,50pt) {$3$}; \node at (50pt,50pt) {$5$}; \node at (70pt,50pt) {$3$}; \node at (90pt,50pt) {$3$};
\node at (110pt,50pt) {$1$};
\node[anchor=west] at (-40pt,30pt) {$F_{3,2}^{2}(w)$};
\node at (50pt,30pt) {$5$}; \node at (70pt,30pt) {$2$}; \node at (90pt,30pt) {$1$};
\node[anchor=west] at (-40pt,10pt) {$F_{3,2}^{3}(w)$};
\node at (70pt,10pt) {$0$};
\end{tikzpicture}
\caption{Iterated application of $F_{p,q}$ on $w$ for $(p,q)=(3,2)$ and $w=3434205$.}
\label{fonw}
\end{figure}
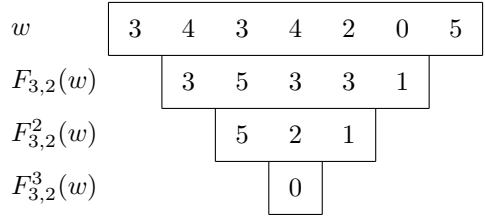

By the definition of $F_{p,q}$, for every $c\in A_{pq}^{\Z}$ and every $i\in\Z$ the value of $F_{p,q}(c)(i)$ is uniquely determined by $c(i-1),c(i)$ and $c(i+1)$, the three nearest digits above in the space-time diagram. Proposition \ref{leftdet} gives similarly that each digit in the space-time diagram is determined by the three nearest digits to the right (see Figure \ref{expdirs}). Its proof is broken down into the following sequence of lemmas.

\begin{figure}
\centering
\begin{tikzpicture}
\draw (0pt,60pt) -- (120pt,60pt);
\draw (0pt,30pt) -- (120pt,30pt);
\draw (0pt,0pt) -- (120pt,0pt);
\node (u1) at (20pt,45pt) {$c(i-1)$}; \node (u2) at (60pt,45pt) {$c(i)$}; \node (u3) at (100pt,45pt) {$c(i+1)$};
                                 \node (d) at (60pt,15pt) {$F_{p,q}(c)(i)$};
\draw[->,thick](u1.south) to [bend right = 45] (d.west);\draw[->,thick](u2) to (d);\draw[->,thick](u3.south) to [bend left = 45] (d.east);
																
\draw (180pt,90pt) -- (280pt,90pt);
\draw (180pt,60pt) -- (280pt,60pt);
\draw (180pt,30pt) -- (280pt,30pt);
\draw (180pt,0pt) -- (280pt,0pt);
                                  \node (r1) at (240pt,75pt) {$F_{p,q}^{-1}(c)(i+1)$};
\node (l) at (190pt,45pt) {$c(i)$}; \node (r2) at (240pt,45pt) {$c(i+1)$};
                                  \node (r3) at (240pt,15pt) {$F_{p,q}(c)(i+1)$};
\draw[->,thick](r1.west) to [bend right = 45] (l.north);\draw[->,thick](r2) to (l);\draw[->,thick](r3.west) to [bend left = 45] (l.south);
\end{tikzpicture}
\caption{Determination of digits in the space-time diagram of $c$ with respect to $F_{p,q}$.}
\label{expdirs}
\end{figure}
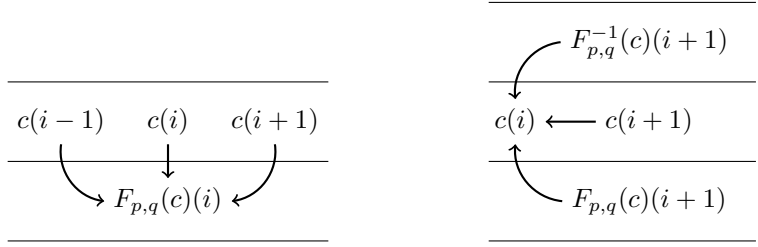

\begin{lmm}\label{g1}If $g_{p,q}(x,z)=g_{p,q}(y,w)$, then $x\equiv y \pmod q$.\end{lmm}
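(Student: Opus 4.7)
The plan is to unfold the definition of $g_{p,q}$ and read off both $x_0$ and $z_1$ uniquely from the output, which immediately forces $x_0 = y_0$ and hence $x \equiv y \pmod q$.

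In detail, write $x = x_1 q + x_0$ and $z = z_1 q + z_0$ with $x_0,z_0 \in A_q$ and $x_1,z_1 \in A_p$, and similarly decompose $y = y_1 q + y_0$, $w = w_1 q + w_0$. By definition $g_{p,q}(x,z) = x_0 p + z_1$ and $g_{p,q}(y,w) = y_0 p + w_1$. Now I would observe that since $0 \le z_1, w_1 < p$, the expression $x_0 p + z_1$ is nothing but the base-$p$ representation of a number in $\{0,1,\dots,pq-1\}$ with units digit $z_1$ and higher digit $x_0$; that is, $z_1$ is the remainder of $g_{p,q}(x,z)$ modulo $p$ and $x_0 = \lfloor g_{p,q}(x,z)/p\rfloor$. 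So both $x_0$ and $z_1$ are recoverable from $g_{p,q}(x,z)$.

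Therefore the equation $x_0 p + z_1 = y_0 p + w_1$, combined with $0 \le z_1, w_1 < p$, yields $z_1 = w_1$ and consequently $x_0 = y_0$. Since $x \equiv x_0 \pmod q$ and $y \equiv y_0 \pmod q$ by construction of the decomposition, we conclude $x \equiv y \pmod q$, as required.

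There is no real obstacle here; the only thing to be careful about is making the uniqueness of the base-$p$ digits $(x_0, z_1)$ in the representation $x_0 p + z_1$ explicit, so that one can cleanly separate the two contributions to $g_{p,q}$. The symmetric observation, that $z_1 = w_1$ also holds, is not needed for the stated conclusion but falls out for free and will presumably be reused in the companion lemmas feeding into Proposition \ref{leftdet}.
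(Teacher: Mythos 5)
Your proof is correct and follows essentially the same route as the paper: decompose $x,y,z,w$ as $x_1q+x_0$ etc., equate $x_0p+z_1=y_0p+w_1$, and use uniqueness of the base-$p$ digits (since $0\le z_1,w_1<p$) to conclude $x_0=y_0$ and hence $x\equiv y\pmod q$. The paper merely leaves the digit-uniqueness step implicit, which you spell out.
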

\begin{proof}Let $x=x_1q+x_0$, $y=y_1q+y_0$, $z=z_1q+z_0$ and $w=w_1q+w_0$. Then
\begin{flalign*}
g_{p,q}(x,z)=g_{p,q}(y,w)&\implies x_0p+z_1=y_0p+w_1 \\
&\implies x_0=y_0\implies x\equiv y\pmod q.
\end{flalign*}
\end{proof}

\begin{lmm}\label{g2}$g_{p,q}(x,a)\equiv g_{p,q}(y,a)\pmod q\iff x\equiv y \pmod q$.\end{lmm}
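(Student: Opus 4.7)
The plan is to unpack both sides of the claimed equivalence using the explicit formula for $g_{p,q}$ and then exploit the hypothesis $\gcd(p,q)=1$. Writing $x=x_1q+x_0$, $y=y_1q+y_0$ and $a=a_1q+a_0$ with $x_0,y_0,a_0\in A_q$ and $x_1,y_1,a_1\in A_p$, the definition gives
\[
g_{p,q}(x,a)=x_0p+a_1,\qquad g_{p,q}(y,a)=y_0p+a_1.
\]
Hence $g_{p,q}(x,a)\equiv g_{p,q}(y,a)\pmod q$ is equivalent to $x_0p\equiv y_0p\pmod q$.

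For the forward direction, since $\gcd(p,q)=1$, I can cancel $p$ modulo $q$ to deduce $x_0\equiv y_0\pmod q$; as $0\le x_0,y_0<q$ this forces $x_0=y_0$, and then $x=x_1q+x_0\equiv y_1q+y_0=y\pmod q$. For the backward direction, the assumption $x\equiv y\pmod q$ likewise means $x_0=y_0$ (both are the canonical residue in $\{0,\dots,q-1\}$), so the displayed expressions for $g_{p,q}(x,a)$ and $g_{p,q}(y,a)$ are literally equal, which trivially yields congruence modulo $q$.

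There is essentially no obstacle here: the lemma is a direct computation, and the only nontrivial ingredient is the cancellation of $p$ modulo $q$, which is available precisely because $p$ and $q$ were assumed coprime from the outset. Note also the pleasant feature that the digit $a$ of the second argument contributes only the additive constant $a_1$, which cancels on both sides, so the congruence really only depends on the low-order $q$-digits of $x$ and $y$; this is consistent with, and slightly refines, the one-sided implication already established in Lemma~\ref{g1}.
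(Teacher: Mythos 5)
Your proof is correct and follows essentially the same route as the paper's: both decompose $x,y,a$ into their base-$q$ digit representations, reduce the congruence to $x_0p\equiv y_0p\pmod q$, and use coprimality of $p$ and $q$ together with $x_0,y_0\in A_q$ to conclude $x_0=y_0$, which is equivalent to $x\equiv y\pmod q$. The paper compresses this into one chain of equivalences, but the content is identical.
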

\begin{proof}Let $x=x_1q+x_0$, $y=y_1q+y_0$ and $a=a_1q+a_0$. Then
\begin{flalign*}
g_{p,q}(x,a)\equiv g_{p,q}(y,a)\pmod q&\iff x_0p+a_1\equiv y_0p+a_1\pmod q \\
&\iff x_0=y_0\iff x\equiv y\pmod q 
\end{flalign*}
\end{proof}

\begin{lmm}\label{f1}If $f_{p,q}(x,a,y)=f_{p,q}(z,a,w)$, then $x\equiv z \pmod q$.\end{lmm}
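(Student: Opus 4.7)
The plan is to unfold the definition of $f_{p,q}$ in terms of $g_{p,q}$ and then apply Lemmas \ref{g1} and \ref{g2} in succession. Recall that
\[f_{p,q}(x,a,y) = g_{p,q}\bigl(g_{p,q}(x,a),\, g_{p,q}(a,y)\bigr),\]
so the hypothesis $f_{p,q}(x,a,y) = f_{p,q}(z,a,w)$ becomes
\[g_{p,q}\bigl(g_{p,q}(x,a),\, g_{p,q}(a,y)\bigr) = g_{p,q}\bigl(g_{p,q}(z,a),\, g_{p,q}(a,w)\bigr).\]

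Setting $X = g_{p,q}(x,a)$, $Z = g_{p,q}(z,a)$, $Y' = g_{p,q}(a,y)$, $W' = g_{p,q}(a,w)$, this has the form $g_{p,q}(X, Y') = g_{p,q}(Z, W')$. Applying Lemma \ref{g1} to the outer $g_{p,q}$ gives
\[g_{p,q}(x,a) \equiv g_{p,q}(z,a) \pmod{q}.\]

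Now I would apply Lemma \ref{g2} to the common second argument $a$: it says precisely that $g_{p,q}(x,a) \equiv g_{p,q}(z,a) \pmod q$ holds if and only if $x \equiv z \pmod q$. This yields the required conclusion $x \equiv z \pmod q$, finishing the proof.

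There is no real obstacle: the lemma is a straightforward two-step chain using the two preceding lemmas about $g_{p,q}$, reflecting the fact that $f_{p,q}$ is a composition of two layers of $g_{p,q}$, and that the dependence on the left argument modulo $q$ propagates through both layers.
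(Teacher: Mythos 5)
Your proof is correct and follows exactly the paper's argument: unfold $f_{p,q}$ as a double application of $g_{p,q}$, use Lemma \ref{g1} on the outer layer to get $g_{p,q}(x,a)\equiv g_{p,q}(z,a)\pmod q$, then conclude $x\equiv z\pmod q$ by Lemma \ref{g2}. Nothing to add.
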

\begin{proof}
\begin{flalign*}
&f_{p,q}(x,a,y)=f_{p,q}(z,a,w) \\
\implies &g_{p,q}(g_{p,q}(x,a),g_{p,q}(a,y))=g_{p,q}(g_{p,q}(z,a),g_{p,q}(a,w)) \\
\overset{L \ref{g1}}{\implies} &g_{p,q}(x,a)\equiv g_{p,q}(z,a)\pmod{q}
\overset{L \ref{g2}}{\implies} x\equiv z\pmod{q}.
\end{flalign*}
\end{proof}

\begin{prpstn}\label{leftdet}For every $c\in A_{pq}^{\Z}$ and for all $k,i\in\Z$, the value of $F_{p,q}^k(c)(i)$ is uniquely determined by the values of $F_{p,q}^{k-1}(c)(i+1)$, $F_{p,q}^k(c)(i+1)$ and $F_{p,q}^{k+1}(c)(i+1)$.\end{prpstn}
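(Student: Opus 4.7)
The plan is to reduce the claim to a uniqueness statement at position $i$. Setting $d = F_{p,q}^k(c)$, the proposition becomes the assertion that $d(i)$ is uniquely determined by $F_{p,q}^{-1}(d)(i+1)$, $d(i+1)$, and $F_{p,q}(d)(i+1)$. Equivalently, if two configurations $c, c' \in A_{pq}^\Z$ satisfy $c(i+1) = c'(i+1)$, $F_{p,q}(c)(i+1) = F_{p,q}(c')(i+1)$, and $F_{p,q}^{-1}(c)(i+1) = F_{p,q}^{-1}(c')(i+1)$, then I aim to conclude $c(i) = c'(i)$.

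The strategy is to extract $c(i) \bmod q$ and $c(i) \bmod p$ separately and glue them via the Chinese Remainder Theorem. Writing $a$ for the common value $c(i+1) = c'(i+1)$, the assumption $F_{p,q}(c)(i+1) = F_{p,q}(c')(i+1)$ reads
\[ f_{p,q}(c(i), a, c(i+2)) = f_{p,q}(c'(i), a, c'(i+2)), \]
so Lemma \ref{f1} immediately gives $c(i) \equiv c'(i) \pmod q$. For the complementary congruence I plan to exploit that $F_{p,q}^{-1} = F_{q,p}$, whose local rule is $f_{q,p}$. Since the statements and proofs of Lemmas \ref{g1}, \ref{g2}, and \ref{f1} are entirely symmetric in $p$ and $q$ (only the modulus in the conclusion is swapped), the analogue of Lemma \ref{f1} for $f_{q,p}$ yields $c(i) \equiv c'(i) \pmod p$ from the assumption on $F_{p,q}^{-1}(c)(i+1)$.

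Since $\gcd(p,q) = 1$ and $c(i), c'(i) \in \{0, 1, \ldots, pq-1\}$, the Chinese Remainder Theorem forces $c(i) = c'(i)$, which completes the argument. I do not expect a genuine obstacle here: the only point requiring care is the tacit use of Lemma \ref{f1} with the roles of $p$ and $q$ interchanged, which is free once one notes the $p$--$q$ symmetry of the earlier lemmas. Everything else is immediate from the definitions and CRT.
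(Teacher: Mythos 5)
Your proposal is correct and follows essentially the same route as the paper: apply Lemma \ref{f1} to the forward image to pin down $c(i)$ modulo $q$, apply the $p$--$q$ symmetric version of the same lemma to the local rule $f_{q,p}$ of the inverse $F_{p,q}^{-1}=F_{q,p}$ to pin it down modulo $p$, and conclude by the Chinese Remainder Theorem since $c(i)\in A_{pq}$. The only cosmetic difference is that the paper first shifts the configuration so the relevant cell sits at position $0$, while you argue directly at position $i$; your explicit remark about reusing Lemma \ref{f1} with $p$ and $q$ interchanged is exactly the tacit step the paper covers with ``by the same lemma.''
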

\begin{proof}Denote $e=\sigma_{A_{pq}}^i(F_{p,q}^k(c))$. It suffices to show that $e(0)$ is uniquely determined by $F_{q,p}(e)(1)$, $e(1)$ and $F_{p,q}(e)(1)$. Since $F_{p,q}(e)(1)=f_{p,q}(e(0),e(1),e(2))$, by Lemma \ref{f1} $e(1)$ and $F_{p,q}(e)(1)$ determine the value of $e(0)$ modulo $q$ (see Figure \ref{leftdet}, left). Similarly, because $F_{q,p}(e)(1)=f_{q,p}(e(0),e(1),e(2))$, by the same lemma $e(1)$ and $F_{q,p}(e)(1)$ determine the value of $e(0)$ modulo $p$ (Fig. \ref{leftdet}, middle). In total, $F_{q,p}(e)(1)$, $e(1)$ and $F_{p,q}(e)(1)$ determine the value of $e(0)$ both modulo $q$ and modulo $p$ (Fig. \ref{leftdet}, right). Because $e(0)\in A_{pq}$, the value of $e(0)$ is uniquely determined. \end{proof}

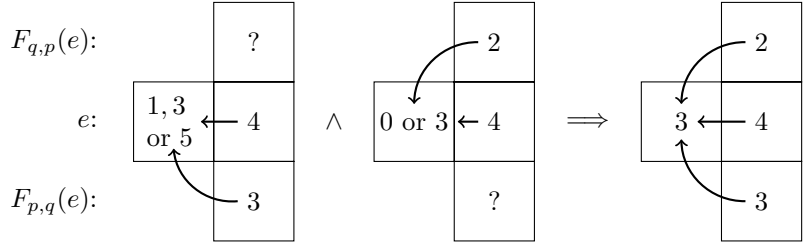
\begin{figure}[h]
\centering
\begin{tikzpicture}
\def\tetris{ rectangle ++(30pt,30pt) ++(-60pt,0pt) rectangle ++(30pt,30pt) ++(0pt,-30pt) rectangle ++(30pt,30pt) ++(-30pt,0pt) rectangle ++(30pt,30pt)}

\node[anchor=east] at (-10pt,75pt) {$F_{q,p}(e)$:};
\node[anchor=east] at (-10pt,45pt) {$e$:};
\node[anchor=east] at (-10pt,15pt) {$F_{p,q}(e)$:};
\draw (30pt,0pt) \tetris;
\node[text width=20pt] at (15pt,45pt) {$1,3$ or $5$};
\node[minimum size=20pt] (l) at (15pt,45pt) {};
\node at (45pt,75pt) {?};
\node at (45pt,45pt) (l1) {$4$};
\node at (45pt,15pt) (l2) {$3$};
\draw[->,thick](l1.west) to (l.east); \draw[->,thick] (l2.west) to [bend left = 45]  (l.south);

\node at (75pt,45pt) {$\land$};

\draw (120pt,0pt) \tetris;
\node (m) at (105pt,45pt) {$0$ or $3$};
\node (m1) at (135pt,75pt) {$2$};
\node (m2) at (135pt,45pt) {$4$};
\node at (135pt,15pt) {?};
\draw[->,thick](m1.west) to [bend right = 45] (m.north); \draw[->,thick] (m2.west) to (m.east);

\node at (170pt,45pt) {$\implies$};

\draw (220pt,0pt) \tetris;
\node (r) at (205pt,45pt) {$3$};
\node (r1) at (235pt,75pt) {$2$};
\node (r2) at (235pt,45pt) {$4$};
\node (r3) at (235pt,15pt) {$3$};
\draw[->,thick](r1.west) to [bend right = 45] (r.north); \draw[->,thick] (r2.west) to (r.east); \draw[->,thick] (r3.west) to [bend left = 45]  (r.south);
\end{tikzpicture}
\caption{The proof of Proposition \ref{leftdet} (here $(p,q)=(3,2)$).}
\label{ldetproof}
\end{figure}

\section{Traces of configurations}

For $\xi\geq0$ we are interested in the values of $\{\xi(p/q)^i\}$ as $i$ ranges over $\N$. In terms of the configuration $\config_{pq}(\xi)$ these correspond to the tails of the configurations $F_{p,q}^i(\config_{pq}(\xi))$, i.e. to the digits $F_{p,q}^i(\config_{pq}(\xi))(j)$ for $j>0$. Partial information on the tails is preserved in the traces of a configuration. In this section we study traces with respect to $F_{p,q}$ to prove in the case $p\geq2q-1$ the existence of small sets $S$ such that $Z_{p/q}(S)$ is non-empty, and then as a corollary for all $p>q>1$.

\begin{dfntn}For any $k\in\Z$, the $k$\emph{-trace} of a configuration $c\in A_{pq}^{\Z}$ (with respect to $F_{p,q}$) is the sequence
\[\tr_{p,q}(c,k)=(F_{p,q}^n(c)(k))_{n\in\Z}.\]
In the special case $k=1$, we denote $\tr_{p,q}(c,1)=\tr_{p,q}(c)$.\end{dfntn}

A $k$-trace of $c$ is simply the sequence of digits in the $k$-th column of the space-time diagram of $c$ with respect to $F_{p,q}$ (see Figure \ref{tr}).

\begin{figure}[h]
\centering
\begin{tikzpicture}
\draw (0pt,100pt) -- (180pt,100pt);
\draw (0pt,80pt) -- (180pt,80pt);
\draw (0pt,60pt) -- (180pt,60pt);
\draw (0pt,40pt) -- (180pt,40pt);
\draw (0pt,20pt) -- (180pt,20pt);
\draw (0pt,0pt) -- (180pt,0pt);
\draw[dashed] (80pt,105pt) -- (80pt,-5pt);
\draw[dashed] (100pt,105pt) -- (100pt,-5pt);
\node[anchor=east] at (-5pt,90pt) {$F_{3,2}^{-2}(c)$};
\node at (10pt,90pt) {$\dots$}; \node at (30pt,90pt) {$5$}; \node at (50pt,90pt) {$4$}; \node at (70pt,90pt) {$0$}; \node at (90pt,90pt) {$1$};
\node at (110pt,90pt) {$5$}; \node at (130pt,90pt) {$3$}; \node at (150pt,90pt) {$4$}; \node at (170pt,90pt) {$\dots$};
\node[anchor=east] at (-5pt,70pt) {$F_{3,2}^{-1}(c)$};
\node at (10pt,70pt) {$\dots$}; \node at (30pt,70pt) {$2$}; \node at (50pt,70pt) {$3$}; \node at (70pt,70pt) {$0$}; \node at (90pt,70pt) {$2$};
\node at (110pt,70pt) {$5$}; \node at (130pt,70pt) {$2$}; \node at (150pt,70pt) {$3$}; \node at (170pt,70pt) {$\dots$};
\node[anchor=east] at (-5pt,50pt) {$c$};
\node at (10pt,50pt) {$\dots$}; \node at (30pt,50pt) {$3$}; \node at (50pt,50pt) {$4$}; \node at (70pt,50pt) {$3$}; \node at (90pt,50pt) {$4$};
\node at (110pt,50pt) {$2$}; \node at (130pt,50pt) {$0$}; \node at (150pt,50pt) {$5$}; \node at (170pt,50pt) {$\dots$};
\node[anchor=east] at (-5pt,30pt) {$F_{3,2}(c)$};
\node at (10pt,30pt) {$\dots$}; \node at (30pt,30pt) {$5$}; \node at (50pt,30pt) {$3$}; \node at (70pt,30pt) {$5$}; \node at (90pt,30pt) {$3$};
\node at (110pt,30pt) {$3$}; \node at (130pt,30pt) {$1$}; \node at (150pt,30pt) {$2$}; \node at (170pt,30pt) {$\dots$};
\node[anchor=east] at (-5pt,10pt) {$F_{3,2}^2(c)$};
\node at (10pt,10pt) {$\dots$}; \node at (30pt,10pt) {$5$}; \node at (50pt,10pt) {$2$}; \node at (70pt,10pt) {$5$}; \node at (90pt,10pt) {$2$};
\node at (110pt,10pt) {$1$}; \node at (130pt,10pt) {$5$}; \node at (150pt,10pt) {$1$}; \node at (170pt,10pt) {$\dots$};
\end{tikzpicture}
\caption{A trace of a configuration.}
\label{tr}
\end{figure}

\begin{dfntn}The set of allowed words of $\tr_{p,q}$ is
\[L(p,q)=\{w\in A_{pq}^*\mid w\text{ is a factor of } \tr_{p,q}(c) \text{ for some } c\in A_{pq}^\Z\},\]
i.e. the set of words that can appear in the columns of space-time diagrams with respect to $F_{p,q}$.\end{dfntn}

The following is a reformulation of Proposition \ref{leftdet} in terms of traces (see Figure \ref{trdet}).

\begin{crllr}\label{leftdetTr}For every $c\in A_{pq}^\Z$ and $k>0$, the values of $\tr_{p,q}(c,k)(i)$ for $-(k-1)\leq i\leq (k-1)$ uniquely determine the values of $c(j)$ for $1\leq j\leq k$. \end{crllr}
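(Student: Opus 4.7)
The plan is to prove the corollary by a downward induction on the column index, peeling off one column at a time from right to left via repeated application of Proposition \ref{leftdet}. That proposition says each value $F_{p,q}^i(c)(j)$ is determined by the three vertically consecutive values $F_{p,q}^{i-1}(c)(j+1)$, $F_{p,q}^i(c)(j+1)$, $F_{p,q}^{i+1}(c)(j+1)$ immediately to its right, so shifting attention one column to the left shortens the known time-window by exactly one entry on each end.

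Formally, I would prove by induction on $j \in \{0, 1, \dots, k-1\}$ the statement that the given trace data determines $F_{p,q}^i(c)(k-j)$ for every $i$ with $-(k-1-j) \leq i \leq k-1-j$. The base case $j = 0$ is the hypothesis that $\tr_{p,q}(c,k)(i) = F_{p,q}^i(c)(k)$ is known for $-(k-1) \leq i \leq k-1$. For the inductive step, assume the values $F_{p,q}^i(c)(k-j)$ are known throughout $-(k-1-j) \leq i \leq k-1-j$; by Proposition \ref{leftdet}, the value $F_{p,q}^i(c)(k-j-1)$ is then determined whenever the required triple $F_{p,q}^{i-1}(c)(k-j)$, $F_{p,q}^i(c)(k-j)$, $F_{p,q}^{i+1}(c)(k-j)$ lies inside the known window, i.e.\ whenever $-(k-2-j) \leq i \leq k-2-j$, which is exactly the claim for $j+1$.

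Since $0$ lies in the window $\{-(k-1-j), \dots, k-1-j\}$ at every step of the induction, we recover $c(k-j) = F_{p,q}^0(c)(k-j)$ for each $j = 0, 1, \dots, k-1$, yielding all of $c(1), c(2), \dots, c(k)$. The only issue to watch is the index bookkeeping: one must check that the window loses exactly one entry on each end per column and remains nonempty (in particular still contains $0$) all the way down to column $1$, where it collapses to $\{0\}$ and produces $c(1) = F_{p,q}^0(c)(1)$. Beyond this straightforward verification the corollary is really just an iterated unwinding of Proposition \ref{leftdet}, so there is no serious conceptual obstacle.
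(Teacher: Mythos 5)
Your proof is correct and is essentially the paper's argument: the paper also proceeds by induction (phrased as induction on $k$, recovering the trace window of column $k$ from that of column $k+1$ via Proposition \ref{leftdet} and reading off $c(k+1)$ from the $i=0$ entry), which is just your column-by-column peeling written as an outer recursion. Your index bookkeeping for the shrinking window is accurate, so no changes are needed.
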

\begin{proof}The proof is by induction. The case $k=1$ follows from the fact that $\tr_{p,q}(c,1)(0)=c(1)$. Next assume that the claim holds for some $k>0$ and consider the values of $\tr_{p,q}(c,k+1)(i)$ for $-k\leq i\leq k$. By Proposition \ref{leftdet} these determine $\tr_{p,q}(c,k)(i)$ for $-(k-1)\leq i\leq (k-1)$, which in turn determine $c(j)$ for $1\leq j\leq k$ by the induction hypothesis. The value of $c(k+1)$ is determined by $\tr_{p,q}(c,k+1)(0)=c(k+1)$.\end{proof}

\begin{figure}
\centering
\begin{tikzpicture}
\draw (20pt,80pt) -- (100pt,80pt);
\draw (20pt,60pt) -- (100pt,60pt);
\draw (80pt,140pt) -- (100pt,140pt);
\draw (80pt,0pt) -- (100pt,0pt);

\draw (80pt,145pt) -- (80pt,-5pt);
\draw (100pt,145pt) -- (100pt,-5pt);
\draw (20pt,80pt) -- (20pt,60pt);

\draw[dashed] (20pt,80pt) -- (80pt,140pt);
\draw[dashed] (20pt,60pt) -- (80pt,0pt);

\node[anchor=east] at (-5pt,130pt) {$F_{3,2}^{-3}(c)$};
\node at (90pt,130pt) {$1$};
\node[anchor=east] at (-5pt,110pt) {$F_{3,2}^{-2}(c)$};
\node at (70pt,110pt) {$0$}; \node at (90pt,110pt) {$2$};
\node[anchor=east] at (-5pt,90pt) {$F_{3,2}^{-1}(c)$};
\node at (50pt,90pt) {$4$}; \node at (70pt,90pt) {$3$}; \node at (90pt,90pt) {$3$};
\node[anchor=east] at (-5pt,70pt) {$c$};
\node at (10pt,70pt) {$\dots$}; \node at (30pt,70pt) {$2$}; \node at (50pt,70pt) {$3$}; \node at (70pt,70pt) {$5$}; \node at (90pt,70pt) {$1$};
\node at (110pt,70pt) {$\dots$};
\node[anchor=east] at (-5pt,50pt) {$F_{3,2}(c)$};
\node at (50pt,50pt) {$5$}; \node at (70pt,50pt) {$4$}; \node at (90pt,50pt) {$5$};
\node[anchor=east] at (-5pt,30pt) {$F_{3,2}^2(c)$};
\node at (70pt,30pt) {$4$}; \node at (90pt,30pt) {$2$};
\node[anchor=east] at (-5pt,10pt) {$F_{3,2}^3(c)$};
\node at (90pt,10pt) {$3$};
\end{tikzpicture}
\caption{A trace determining part of the configuration.}
\label{trdet}
\end{figure}

Next we prove an important restriction on the words in the set $L(q,p)$ when $p\geq 2q-1$. Note that the words in $L(q,p)$ are mirror images of the words in $L(p,q)$ (traces with respect to $F_{p,q}$ are read "from bottom to top"). 

\begin{lmm}\label{allowed}Let $p>q\geq 2$ be relatively prime such that $p\geq 2q-1$, and for every $d\in A_q$ let $k_d\in A_p$ and $j_d\in A_q$ be the unique elements such that $k_d q\equiv d\pmod p$ and $k_d q=j_d p+d$. If $wab\in L(q,p)$ for some $w\in A_{pq}^*$, $a,b\in A_{pq}$ and $a\equiv k_d\pmod p$, then $b\equiv j_d\pmod q$.\end{lmm}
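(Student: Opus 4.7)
The plan is to unwrap the definition of $F_{q,p}$ directly, reduce the computation modulo $q$, and use the hypothesis $p\geq 2q-1$ to kill a carry. Since $wab\in L(q,p)$, there exist a configuration $c\in A_{pq}^{\Z}$ and an integer $n$ such that $a=F_{q,p}^n(c)(1)$ and $b=F_{q,p}^{n+1}(c)(1)$. Setting $e=F_{q,p}^n(c)$ gives $e(1)=a$ and
\[
b=f_{q,p}(e(0),a,e(2))=g_{q,p}\bigl(g_{q,p}(e(0),a),\,g_{q,p}(a,e(2))\bigr),
\]
so it suffices to analyze the right-hand side from the definition of $g_{q,p}$.

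Next I would unfold $g_{q,p}$ using the base-$p$ decomposition demanded by the rule: write each digit $x\in A_{pq}$ uniquely as $x=x_1 p+x_0$ with $x_0\in A_p$ and $x_1\in A_q$, so that $g_{q,p}(x,y)=x_0 q+y_1$. Applying this to $a=a_1p+a_0$ and $y=e(2)=y_1p+y_0$, the inner value $g_{q,p}(a,y)$ equals $a_0q+y_1$. When this is re-decomposed as $Y_1p+Y_0$ for the outer application of $g_{q,p}$, the outer output $b$ has the shape $(\cdots)q+Y_1$ with $Y_1\in A_q$, and hence $b\equiv Y_1\pmod q$ where
\[
Y_1=\left\lfloor\frac{a_0q+y_1}{p}\right\rfloor.
\]

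Now I would inject the hypotheses. The assumption $a\equiv k_d\pmod p$ says exactly that $a_0=k_d$, and the defining relation $k_dq=j_dp+d$ gives
\[
a_0q+y_1=j_dp+d+y_1,\qquad\text{so}\qquad Y_1=j_d+\left\lfloor\frac{d+y_1}{p}\right\rfloor.
\]
Both $d$ and $y_1$ lie in $A_q$, so $d+y_1\leq 2(q-1)=2q-2$, and the assumption $p\geq 2q-1$ yields $d+y_1<p$, making the floor zero. Consequently $Y_1=j_d$ and $b\equiv j_d\pmod q$.

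The only real obstacle is bookkeeping: because the statement is phrased for $L(q,p)$ rather than $L(p,q)$, the roles of $p$ and $q$ in the digit decomposition are swapped relative to the running examples, and one must be careful to use $x=x_1p+x_0$ with $x_0\in A_p$ throughout. Once that convention is fixed the proof is a one-line computation ending in the estimate $d+y_1\leq 2q-2<p$, which is precisely where the hypothesis $p\geq 2q-1$ enters.
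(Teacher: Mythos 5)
Your proof is correct and follows essentially the same route as the paper: both extract $b=f_{q,p}(x,a,y)$ from the trace, identify $a_0=k_d$ in the base-$p$ decomposition, and observe that $g_{q,p}(a,y)=k_dq+y_1=j_dp+(d+y_1)$ with $d+y_1\leq 2q-2<p$, so the carry vanishes and the outer application of $g_{q,p}$ outputs $j_d$ modulo $q$. Your floor-function phrasing of the high base-$p$ digit is just a cosmetic variant of the paper's direct digit decomposition.
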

\begin{proof}From $wab\in L(q,p)$ it follows that $b=f_{q,p}(x,a,y)$ for some $x,y\in A_{pq}$. Let us write $a=a_1p+a_0$, $y=y_1p+y_0$, $g_{q,p}(x,a)=z=z_1p+z_0$ and $g_{q,p}(a,y)=w=w_1p+w_0$, where $a_0,y_0,z_0,w_0\in A_p$ and $a_1,y_1,z_1,w_1\in A_q$. Here $a_0=k_d$ because $a\equiv k_d\pmod p$ and $w_1=j_d$ because $g_{q,p}(a,y)=k_d q+y_1=j_d p+(d+y_1)$ and $d+y_1\leq(q-1)+(q-1)<p$. Now
\[f_{q,p}(x,a,y)=g_{q,p}(g_{q,p}(x,a),g_{q,p}(a,y))=g_{q,p}(z,w)=z_0 q+j_d,\]
and thus $b\equiv j_d\pmod q$.\end{proof}

Based on the previous lemma, we define a special set of digits
\[D_{p,q}=\{a\in A_{pq}\mid a\equiv k_d\pmod p \text{ for some } d\in A_q\},\]
where the digits $k_d$ are as above.

\begin{xmpl}Consider the case $p=3$ and $q=2$. Then $A_q=\{0,1\}$ and $D_{3,2}=\{0,2,3,5\}$ consists of the elements of $A_6$ which are congruent to either $k_0=0$ or $k_1=2$ $\pmod 3$.\end{xmpl}

\begin{lmm}\label{allowedN}If $p\geq 2q-1$, then $\abs{L(p,q)\cap D_{p,q}^n}\leq q^{n+1}$ for every $n>0$.\end{lmm}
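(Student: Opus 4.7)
The plan is to combine Lemma \ref{allowed} with the mirror relationship between $L(p,q)$ and $L(q,p)$ to obtain a backward constraint on consecutive letters in any word $u \in L(p,q)$, and then count candidate words right-to-left.

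First I would translate Lemma \ref{allowed} into a statement about $L(p,q)$. If $u = u_1 u_2 \cdots u_n \in L(p,q)$, then $u^R = u_n u_{n-1} \cdots u_1 \in L(q,p)$, and for each $1 \leq i < n$ the prefix $u_n u_{n-1} \cdots u_{i+1} u_i$ of $u^R$ is also in $L(q,p)$. Applying Lemma \ref{allowed} to this prefix with $a = u_{i+1}$ and $b = u_i$ yields: whenever $u_{i+1} \equiv k_d \pmod p$ for some $d \in A_q$, one must have $u_i \equiv j_d \pmod q$. Hence for $u \in L(p,q) \cap D_{p,q}^n$, each letter $u_{i+1} \in D_{p,q}$ completely determines the residue of $u_i$ modulo $q$.

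Next I would record two elementary counts. Since $d \mapsto k_d$ injects $A_q$ into $A_p$, and each residue class modulo $p$ contains exactly $q$ elements of $A_{pq}$, we have $|D_{p,q}| = q^2$. Moreover, since $\gcd(p,q) = 1$, the Chinese Remainder Theorem guarantees that every fixed residue class modulo $q$ contains exactly $q$ elements of $D_{p,q}$: one for each of the $q$ admissible residues $k_d$ modulo $p$.

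Combining these ingredients, any word in $L(p,q) \cap D_{p,q}^n$ is specified by choosing its letters $u_n, u_{n-1}, \ldots, u_1$ in that order: there are at most $q^2$ options for $u_n \in D_{p,q}$, and then at most $q$ options for each $u_i$ with $i < n$, since $u_i \in D_{p,q}$ is forced into the residue class modulo $q$ pinned down by $u_{i+1}$. Multiplying yields $|L(p,q) \cap D_{p,q}^n| \leq q^2 \cdot q^{n-1} = q^{n+1}$. The main subtlety is keeping the direction of the mirror translation of Lemma \ref{allowed} straight; once this is set up, the rest is a routine Chinese Remainder count.
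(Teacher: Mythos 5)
Your proof is correct and follows essentially the same route as the paper: both pass to $L(q,p)$ via the mirror relationship, apply Lemma \ref{allowed} to pin down the residue modulo $q$ of each letter from its right neighbour, and count $q^2$ choices for one position times $q$ for each remaining one. The only difference is presentational: you unroll the paper's induction into a direct right-to-left product count and replace its pigeonhole step by an explicit Chinese Remainder count of $D_{p,q}$ within a residue class modulo $q$.
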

\begin{proof}The proof is by induction. The case $n=1$ is clear because $\abs{D_{p,q}}=q^2$. Next assume that the claim holds for some $n>0$. It is sufficient to compute an upper bound for $\abs{L(q,p)\cap D_{p,q}^{n+1}}$, because the words in $L(p,q)$ are mirror images of the words in $L(q,p)$. If $v\in L(q,p)\cap D_{p,q}^{n+1}$, by the previous lemma it can be written in the form $v=wab$, where $a\equiv k_d\pmod p$ and $b\equiv j_d\pmod q$ for some $d\in A_q$. Because $wa\in L(q,p)\cap D_{p,q}^n$, by the induction hypothesis there are at most $q^{n+1}$ different choices for the word $wa$. Let us fix $wa$ and $d\in A_q$ such that $a\equiv k_d\pmod p$. To prove the claim, it is enough to show that there are at most $q$ choices for the digit $b$.

Let us assume to the contrary that there are distinct digits $b_1,b_2,\dots b_{q+1}\in D_{p,q}$ such that $wab_i\in L(q,p)\cap D_{p,q}^{n+1}$ whenever $1\leq i\leq q+1$. For every $i$ the congruence $b_i\equiv k_{d_i}\pmod p$ holds for some $d_i\in A_q$. By pigeonhole principle we may assume that $d_1=d_2$ and therefore $b_1\equiv k_{d_1}\equiv b_2\pmod p$. Because $wab_1,wab_2\in L(q,p)\cap D_{p,q}^{n+1}$, we also have $b_1\equiv j_d\equiv b_2\pmod q$. Because $b_1,b_2\in A_{pq}$ are congruent both modulo $p$ and modulo $q$, they are equal, contradicting the distinctness of $b_1,b_2,\dots b_{q+1}$.
\end{proof}

As in the introduction, for relatively prime $p>q>1$ and any $S\subseteq [0,1)$ we denote
\[Z_{p/q}(S)=\left\{\xi>0\biggm|\left\{\xi\left(\frac{p}{q}\right)^i\right\}\in S\text{ for every }i\in\N\right\}.\]

In \cite{A} it was proved that if $p,q>1$ are relatively prime integers such that $p>q^2$, then for every $\epsilon>0$ there exists a finite union of intervals $J_{p,q,\epsilon}$ of total length at most $\epsilon$ such that $Z_{p/q}(J_{p,q,\epsilon})\neq\emptyset$. We extend this result to the case $p>q>1$, which in particular covers $p/q=3/2$. The following theorem by Akiyama, Frougny and Sakarovitch is needed.

\begin{sloppypar}
\begin{thrm}[Akiyama, Frougny, Sakarovitch \cite{AFS}]If $p\geq 2q-1$, then $Z_{p/q}(Y_{p,q})\neq\emptyset$, where
\[Y_{p,q}=\bigcup_{d\in A_q}\left[\frac{1}{p}k_d,\frac{1}{p}(k_d+1)\right)\]
and $k_d\in A_p$ are as in Lemma \ref{allowed}.\end{thrm}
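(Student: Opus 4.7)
Plan. My approach is to construct $\xi$ explicitly via a nested interval argument, using the partition of $Y_{p,q}$ into the $q$ disjoint intervals $I_d := [k_d/p,(k_d+1)/p)$ of length $1/p$. The first step is a purely arithmetic lemma: writing $b_d := k_d \bmod q \in A_q$, I observe that the map $e \mapsto j_e$ is a bijection of $A_q$, since $j_{e_1}=j_{e_2}$ gives $q(k_{e_1}-k_{e_2}) = e_1 - e_2$, which forces $e_1 \equiv e_2 \pmod q$ and hence $e_1 = e_2$. For each $b\in A_q$ let $e(b) \in A_q$ be the unique element with $j_{e(b)} = b$; the explicit formula $k_{e(b)}/p = b/q + e(b)/(pq)$ shows that $I_{e(b)} \subset [b/q,(b+1)/q)$ is equivalent to $e(b) + q \leq p$, which holds automatically for every $e(b) \in A_q$ precisely when $p \geq 2q - 1$.

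Next I translate this geometry into a symbolic dynamics. Writing $\xi(p/q)^i = N_i + r_i$ with $r_i = \{\xi(p/q)^i\}$ and $s_i := p N_i \bmod q \in A_q$, a direct computation gives $r_{i+1} = \{s_i/q + (p/q) r_i\}$, so if $r_i \in I_{d_i}$ then $r_{i+1}$ lies in the interval $[b/q,(b+1)/q)$ with $b = (s_i + b_{d_i}) \bmod q$, which by the arithmetic lemma contains $I_{e(b)} \subset Y_{p,q}$. Thus there is always ``room'' to choose $d_{i+1} = e(b)$, and restricting $\xi$ to the subinterval of its currently-admissible range whose $(p/q)^{i+1}$-multiple has fractional part in $I_{d_{i+1}}$ shrinks that range by the factor $q/p$. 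Iterating and taking the intersection of the closures of these nested intervals within any bounded subset of $\R_{>0}$ yields a non-empty compact set by compactness; any $\xi$ in it satisfies $\{\xi(p/q)^i\} \in Y_{p,q}$ for all $i \geq 0$, and starting from $d_0 \neq 0$ keeps $\xi$ bounded below by $1/p > 0$.

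The main obstacle is propagating the carry $s_i$ consistently through the construction. Because $d_{i+1}$ depends on $(s_i + b_{d_i}) \bmod q$ and $s_{i+1}$ depends on $N_i$ modulo higher powers of $q$, the naive symbolic dynamics on the $d_i$ alone is not closed on a finite state space, and a naive formula like $\xi = k_{d_0}/p + p^{-2}\sum_{j\geq 0} d_{j+1}(q/p)^j$ fails once $s_i$ becomes nonzero. The cleanest way to resolve this, following Akiyama, Frougny and Sakarovitch, is to represent each $\xi > 0$ in the rational base $p/q$ as $q\xi = \sum_i a_i (p/q)^i$ with digits $a_i \in A_p$; in this representation multiplication by $p/q$ acts as a shift of digits, and the orbit condition $\{\xi(p/q)^i\} \in Y_{p,q}$ becomes a per-digit constraint. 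The first step guarantees that at every position the next digit can be chosen to keep the orbit admissible, so the required infinite digit sequence exists and determines the desired $\xi$.
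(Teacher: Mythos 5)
First, a point of reference: the paper does not prove this statement at all --- it is imported as a black box from Akiyama--Frougny--Sakarovitch \cite{AFS} --- so your attempt has to be judged on its own merits rather than against an internal proof. Your groundwork is correct: $d\mapsto j_d$ is a bijection of $A_q$, the identity $k_{e(b)}/p=b/q+e(b)/(pq)$ gives $I_{e(b)}\subseteq[b/q,(b+1)/q)$ exactly when $p\geq 2q-1$, and the recursion $r_{i+1}=\{s_i/q+(p/q)r_i\}$ with $s_i=pN_i\bmod q$ is right. But both ways you offer to close the argument have gaps. The carry is not actually the obstacle you claim for the nested-interval route: if at stage $i$ you have confined $\xi$ to a half-open interval $J_i$ whose image under multiplication by $(p/q)^i$ is contained in $[N_i+k_{d_i}/p,\,N_i+(k_{d_i}+1)/p)$, that image contains no integer, so $N_i$, and hence $s_i$, is constant on $J_i$; the induction is perfectly well defined, and no finite-state closure of the symbolic dynamics is needed for mere existence. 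The genuine gap in that route is the endpoint step: ``taking the intersection of the closures \dots any $\xi$ in it satisfies $\{\xi(p/q)^i\}\in Y_{p,q}$'' does not follow, because the constraints are half-open and the choice of $d_{i+1}$ is forced (each $[b/q,(b+1)/q)$ contains exactly one interval $I_d$, namely $d=e(b)$, the others being disjoint from it). When $p=2q-1$ --- which includes $3/2$ --- the forced subinterval can sit flush against the right end of its parent (exactly when $b=q-1$ and $k_{e(b)}=p-1$); if this occurred at every stage, the half-open intervals $J_i$ would all share their right endpoint $v$, their intersection would be empty, and the unique point of $\bigcap_i\overline{J_i}$ would violate the constraints. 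One must rule this out, e.g.\ as follows: flush-right at stage $i$ forces $v(p/q)^{i+1}\in\Z$, so if it persisted for all $i\geq i_0$ then $v(p/q)^{i_0+1}$ would be an integer divisible by $q^k$ for every $k$, forcing $v=0$ and contradicting $v\geq 1/p$; hence the right endpoint strictly decreases infinitely often, so the limit point is strictly below every $v_i$ and therefore lies in every $J_i$. Nothing of this sort appears in your write-up.

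Second, the fallback you actually settle on --- ``represent $\xi$ in the AFS base-$p/q$ system and choose digits one at a time'' --- is a pointer to the literature rather than a proof. The displayed representation $q\xi=\sum_{i\geq0}a_i(p/q)^i$ with $a_i\in A_p$ diverges whenever infinitely many digits are nonzero (the AFS expansions of real numbers use the negative powers of $p/q$), and the two pivotal claims --- that multiplication by $p/q$ acts as a digit shift on a convergent representation, and that the condition $\{\xi(p/q)^i\}\in Y_{p,q}$ is equivalent to a per-digit constraint --- are precisely the nontrivial content of that theory; you assert them without proof, and you do not address why the successive digit choices are consistent, why the resulting series converges to a well-defined $\xi$, or why $\xi>0$. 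So as written the proposal is not a complete proof. The cleaner way to finish along your own lines is the nested-interval argument, supplemented by the constancy-of-$N_i$ observation and the endpoint analysis above.
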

\end{sloppypar}

\begin{crllr}If $p\geq 2q-1$, then $Z_{p/q}(X_{p,q})\neq\emptyset$, where
\[X_{p,q}=\bigcup_{a\in D_{p,q}}\left[\frac{1}{pq}a,\frac{1}{pq}(a+1)\right).\]\end{crllr}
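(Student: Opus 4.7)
The plan is to reduce the problem to the Akiyama--Frougny--Sakarovitch theorem via the simple change of variable $\eta = \xi/q$, using the fact that $X_{p,q}$ is the preimage of $Y_{p,q}$ under $x \mapsto \{qx\}$ on $[0,1)$.

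First I would establish the key equivalence: for any $x \in [0,1)$,
\[x \in X_{p,q} \iff \{qx\} \in Y_{p,q}.\]
For the forward direction, if $x \in [a/(pq), (a+1)/(pq))$ with $a \in D_{p,q}$, then $a$ decomposes uniquely as $a = mp + k_d$ with $0 \leq m < q$ and $d \in A_q$ (this is the content of the definition of $D_{p,q}$, combined with $k_d < p$ and $a \leq pq - 1$). Multiplying by $q$ gives $qx \in [m + k_d/p,\, m + (k_d+1)/p)$, and since $(k_d+1)/p \leq 1$ the integer part is exactly $m$, yielding $\{qx\} \in [k_d/p,\,(k_d+1)/p) \subseteq Y_{p,q}$. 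The converse is the same computation run backwards: if $\{qx\} \in [k_d/p,(k_d+1)/p)$ and $\lfloor qx \rfloor = m \in \{0,\dots,q-1\}$ (which holds automatically since $x \in [0,1)$), then $x$ lies in the interval indexed by $a = mp + k_d \in D_{p,q}$.

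Next I would apply the AFS theorem to obtain $\xi \in Z_{p/q}(Y_{p,q})$, and set $\eta = \xi/q > 0$. For any $i \in \N$, using that $\{qy\}$ depends only on $\{y\}$,
\[\bigl\{q \cdot \{\eta(p/q)^i\}\bigr\} = \{q\eta(p/q)^i\} = \{\xi(p/q)^i\} \in Y_{p,q}.\]
Since $\{\eta(p/q)^i\} \in [0,1)$, the equivalence from the first step immediately gives $\{\eta(p/q)^i\} \in X_{p,q}$, so $\eta \in Z_{p/q}(X_{p,q})$.

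The only delicate point is checking that the decomposition $a = mp + k_d$ is well-defined and exhausts $D_{p,q}$, and that the half-open interval endpoints behave correctly (in particular that $k_d/p < 1$ rules out the fractional part being $0$ on the wrong side of the boundary). Both are routine consequences of the definition of $k_d$ in Lemma \ref{allowed}, so I do not expect any genuine obstacle.
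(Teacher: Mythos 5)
Your proof is correct and follows exactly the paper's argument: the paper's entire proof is the observation that $\xi\in Z_{p/q}(Y_{p,q})$ implies $\xi/q\in Z_{p/q}(X_{p,q})$, which you have simply worked out in detail via the equivalence $x\in X_{p,q}\iff\{qx\}\in Y_{p,q}$ for $x\in[0,1)$. The verification of the decomposition $a=mp+k_d$ and of the identity $\{q\{y\}\}=\{qy\}$ is accurate, so there is nothing to fix.
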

\begin{proof}If $\xi\in Z_{p/q}(Y_{p,q})$, then $\xi/q\in Z_{p/q}(X_{p,q})$.
\end{proof}

\begin{thrm}\label{thm1}If $p\geq 2q-1$ and $k>0$, then there exists a finite union of intervals $I_{p,q,k}$ of total length at most $(q/p)^k$ such that $Z_{p/q}(I_{p,q,k})\neq\emptyset$.\end{thrm}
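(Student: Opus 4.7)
To achieve total length $(q/p)^k$, it suffices to find $\xi>0$ for which the length-$k$ base-$pq$ prefix of $\{\xi(p/q)^i\}$ takes at most $q^{2k}$ distinct values as $i$ ranges over $\N$. Then taking $I_{p,q,k}$ to be the union of the corresponding cylinder intervals $[\real_{pq}(w),\real_{pq}(w)+(pq)^{-k})$ gives a finite union of at most $q^{2k}$ intervals of total length at most $q^{2k}\cdot(pq)^{-k}=(q/p)^k$, and $\xi$ automatically lies in $Z_{p/q}(I_{p,q,k})$.

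\textbf{Main steps.} Write $c=\config_{pq}(\xi)$ so that the length-$k$ prefix at time $i$ is $F_{p,q}^i(c)(1)\cdots F_{p,q}^i(c)(k)$. Applying Corollary \ref{leftdetTr} to the configuration $F_{p,q}^i(c)$ shows that this word is uniquely determined by the length-$(2k-1)$ window $\tr_{p,q}(c,k)(i+j)$, $-(k-1)\leq j\leq k-1$, of the trace of $c$ at position $k$. If these window values all lie in $D_{p,q}$, then the window is a factor of the trace $\tr_{p,q}(\sigma_{A_{pq}}^{k-1}(c))$, and hence an element of $L(p,q)\cap D_{p,q}^{2k-1}$, so by Lemma \ref{allowedN} it has at most $q^{2k}$ possibilities. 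The remaining task is to choose $\xi$ so that $F_{p,q}^t(c)(k)\in D_{p,q}$ for every $t\geq -(k-1)$. Using $F_{p,q}^t(c)=\config_{pq}((p/q)^t\xi)$ and digit extraction, this is equivalent to $\{(pq)^{k-1}(p/q)^t\xi\}\in X_{p,q}$. The substitution $\xi=\xi'/q^{2(k-1)}$ collapses $(pq)^{k-1}(p/q)^t\xi$ to $(p/q)^{t+k-1}\xi'$, so letting $s=t+k-1\geq 0$ the requirement becomes $\xi'\in Z_{p/q}(X_{p,q})$. The preceding Corollary supplies such a $\xi'$; setting $\xi=\xi'/q^{2(k-1)}$ finishes the construction.

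\textbf{Main obstacle.} The only step requiring insight is identifying the correct rescaling $\xi=\xi'/q^{2(k-1)}$: one must recognise that the symmetric length-$(2k-1)$ window around time $0$ at spatial position $k$ is exactly the one-sided trace condition at position $1$ handled by the preceding Corollary, and that the rescaling factor $q^{2(k-1)}=(pq)^{k-1}(q/p)^{k-1}$ is forced by this translation. Once this reduction is in place, the counting is read off directly from Lemma \ref{allowedN}, and the assembly of $I_{p,q,k}$ from cylinder intervals is mechanical.
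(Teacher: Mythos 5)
Your proposal is correct and follows essentially the same route as the paper: the paper also chooses $\xi'\in Z_{p/q}(X_{p,q})$ via the Akiyama--Frougny--Sakarovitch corollary, rescales by $\xi=\xi'(pq)^{-(k-1)}(p/q)^{k-1}=\xi'/q^{2(k-1)}$ so that the $k$-trace of $\config_{pq}(\xi)$ lies in $D_{p,q}$ from time $-(k-1)$ onward, and then bounds the number of length-$k$ prefixes by $q^{2k}$ via Corollary \ref{leftdetTr} and Lemma \ref{allowedN}, assembling $I_{p,q,k}$ from the same cylinder intervals. The only difference is presentational (you phrase the trace condition as $\{(pq)^{k-1}(p/q)^t\xi\}\in X_{p,q}$, while the paper manipulates $\tr_{p,q}(c,k)$ directly), so there is nothing substantive to add.
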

\begin{proof}Let $k>0$ be fixed and choose any $\xi'\in Z_{p/q}(X_{p,q})$, where $X_{p,q}$ is the set in the previous corollary. Let $\xi = \xi'(pq)^{-(k-1)}(p/q)^{k-1}$ and denote $c=\config_{pq}(\xi)$. Based on $c$ we define a collection of words
\[W=\{w=e(1)e(2)\dots e(k)\mid e=F_{p,q}^n(c) \text{ for some } n\in\N\}.\]
The set $W$ determines a finite union of intervals
\[I_{p,q,k}=\bigcup_{w\in W}\left[\real_{pq}(w),\real_{pq}(w)+(pq)^{-k}\right)\]
and $\xi\in Z_{p/q}(I_{p,q,k})$ by the definition of $W$. Each interval in $I_{p,q,k}$ has length $(pq)^{-k}$, so to prove that the total length of $I_{p,q,k}$ is at most $(q/p)^k$ it is sufficient to show that $\abs{W}\leq q^{2k}$.

By the definition of $X_{p,q}$, $\tr_{p,q}(\config_{pq}(\xi'))(i)\in D_{p,q}$ for every $i\geq 0$. For the $k$-trace of $c$
\begin{flalign*}
&\tr_{p,q}(c,k)(i)=\tr_{p,q}(\config_{pq}(\xi'(pq)^{-(k-1)}(p/q)^{k-1}),k)(i) \\
=&\tr_{p,q}(\sigma_{A_{pq}}^{-(k-1)}(F_{p,q}^{k-1}(\config_{pq}(\xi'))),k)(i)=\tr_{p,q}(F_{p,q}^{k-1}(\config_{pq}(\xi')),1)(i) \\
=&\tr_{p,q}(\config_{pq}(\xi'))(i+(k-1))\text{ for every }i\in\N,
\end{flalign*}
from which it follows that $\tr_{p,q}(c,k)(i)\in D_{p,q}$ for every $i\geq-(k-1)$. Thus, the words in the set
\[V=\{\tr_{p,q}(F_{p,q}^n(c),k)(-(k-1))\dots \tr_{p,q}(F_{p,q}^n(c),k)(k-1)\mid n\in\N\}\]
also belong to $L(p,q)\cap D_{p,q}^{2k-1}$, and by Corollary \ref{leftdetTr} and Lemma \ref{allowedN}
\[\abs{W}\leq\abs{V}\leq\abs{L(p,q)\cap D_{p,q}^{2k-1}}\leq q^{2k}.\]
\end{proof}

\begin{rmrk}The set $I_{p,q,k}$ constructed in the proof of the previous theorem is a union of $q^{2k}$ intervals, each of which is of length $(pq)^{-k}$.\end{rmrk}

\begin{crllr}\label{cor}If $p>q>1$ and $\epsilon>0$, then there exists a finite union of intervals $J_{p,q,\epsilon}$ of total length at most $\epsilon$ such that $Z_{p/q}(J_{p,q,\epsilon})\neq\emptyset$.\end{crllr}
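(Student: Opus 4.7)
The plan is to bootstrap Theorem \ref{thm1} by passing to $n$-th powers of $p/q$. Since $p > q \geq 2$, the ratio $(p/q)^n$ tends to infinity, so I can fix $n$ large enough that $p^n \geq 2q^n - 1$; coprimality of $p$ and $q$ ensures $\gcd(p^n, q^n) = 1$. For each $k > 0$, applying Theorem \ref{thm1} to the pair $(p^n, q^n)$ produces some $\xi > 0$ and a finite union of intervals $I \subseteq [0,1)$ of total length at most $(q^n/p^n)^k = (q/p)^{nk}$ such that $\{\xi(p/q)^{ni}\} = \{\xi(p^n/q^n)^i\} \in I$ for every $i \in \N$.

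To obtain a condition on $\{\xi(p/q)^j\}$ for \emph{all} $j \in \N$, I would write $j = ni + r$ with $0 \leq r < n$ and expand
\[ \xi (p/q)^{j} = (p/q)^r \lfloor \xi(p/q)^{ni} \rfloor + (p/q)^r \{\xi(p/q)^{ni}\}. \]
Setting $M_i = \lfloor \xi(p/q)^{ni} \rfloor$, the quantity $(p/q)^r M_i = p^r M_i / q^r$ has fractional part of the form $m/q^r$ for some integer $m \in \{0, 1, \dots, q^r - 1\}$ depending on $i$. Hence $\{\xi(p/q)^j\}$ lies in
\[ J_r := \bigcup_{m=0}^{q^r - 1} \{(m/q^r + (p/q)^r x) \bmod 1 \mid x \in I\}, \]
and $J_{p,q,\epsilon} := \bigcup_{r=0}^{n-1} J_r$ is a finite union of intervals with $\xi \in Z_{p/q}(J_{p,q,\epsilon})$.

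For the length estimate, each $J_r$ is a union of $q^r$ translates modulo $1$ of the set $(p/q)^r I$, so its Lebesgue measure is at most $q^r \cdot (p/q)^r |I| = p^r |I|$. Summing and using $|I| \leq (q/p)^{nk}$,
\[ |J_{p,q,\epsilon}| \leq \sum_{r=0}^{n-1} p^r (q/p)^{nk} = \frac{p^n - 1}{p - 1}\,(q/p)^{nk}, \]
which tends to $0$ as $k \to \infty$ with $n$ fixed, since $p > q$. Choosing $k$ sufficiently large makes the total length at most $\epsilon$.

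The main conceptual step is the coset decomposition in the second paragraph: although Theorem \ref{thm1} only controls the sub-orbit indexed by multiples of $n$, the integer parts $M_i$ force $\{\xi(p/q)^j\}$ for general $j$ to visit only a finite, controlled family of shifted and dilated copies of $I$, indexed by the residue $j \bmod n$ and by $p^r M_i \bmod q^r$. Once this structure is spotted the length bound is routine, because $(q/p)^{nk}$ decays exponentially in $k$ while the prefactor $(p^n - 1)/(p-1)$ remains fixed.
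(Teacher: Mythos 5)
Your proof is correct and takes essentially the same route as the paper: both choose $n$ with $p^n \geq 2q^n-1$, apply Theorem \ref{thm1} to the pair $(p^n,q^n)$, and then cover the exponents $j \equiv r \pmod{n}$ by the image of $I$ under multiplication by $(p/q)^r$ modulo $1$, with measure inflated by at most $p^r$ and the total controlled by the factor $(p^n-1)/(p-1)$ against the decay $(q/p)^{nk}$. The only cosmetic difference is that you describe these images in closed form as $q^r$ translates of $(p/q)^r I$, while the paper builds them inductively one multiplication by $p/q$ at a time.
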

\begin{proof}Choose some $n>0$ such that $p^n\geq 2q^n-1$. Then by the previous theorem there exists a finite union of intervals $I_0$ of total length at most $\eta=\epsilon(p-1)/(p^n-1)$ such that $Z_{p^n/q^n}(I_0)\neq\emptyset$. For $0<i<n$ define inductively
\[I_i=\left\{\left\{\xi\frac{p}{q}\right\}\in[0,1)\biggm|\xi\geq 0 \text{ and }\left\{\xi\right\}\in I_{i-1}\right\},\]
each of which is a finite union of intervals of total length at most $p^i\eta$. Then $J_{p,q,\epsilon}=\bigcup_{i=0}^{n-1}I_i$ is a finite union of intervals of total length at most
\[\sum_{i=0}^{n-1}(p^i)\eta=\frac{p^n-1}{p-1}\eta=\epsilon\]
and $Z_{p/q}(J_{p,q,\epsilon})\supseteq Z_{p^k/q^k}(I_0)\neq\emptyset$.
\end{proof}

\section{Ergodicity of $F_{p,q}$}

In this section we study the measure theoretical properties of $F_{p,q}$ to prove the existence of large sets $S$ such that $Z_{p/q}(S)$ is empty.

\begin{sloppypar}
\begin{dfntn}A CA function $F:A^\Z\to A^\Z$ is \emph{measure preserving} if $\mu(F^{-1}(S))=\mu(S)$ for every $S\in\Sigma(\mathcal{C})$.
\end{dfntn}
\end{sloppypar}

\begin{dfntn}A measure preserving CA function $F:A^\Z\to A^\Z$ is \emph{ergodic} if for every $S\in \Sigma(\mathcal{C})$ with $F^{-1}(S)=S$ either $\mu(S)=0$ or $\mu(S)=1$.\end{dfntn}

The next lemma is a special case of a well known measure theoretical result (see e.g. Theorem 2.18 in \cite{R}):
\begin{lmm}\label{cover}For every $S\in\Sigma(\mathcal{C})$ and $\epsilon>0$ there is an open set $U\subseteq A^{\Z}$ such that $S\subseteq U$ and $\mu(U\setminus S)<\epsilon$.\end{lmm}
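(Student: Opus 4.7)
The plan is to build $U$ directly as a countable union of cylinders that covers $S$ but has only slightly larger measure. Since $\mu$ was introduced as the unique probability measure on $\Sigma(\mathcal{C})$ with prescribed values on cylinders, it arises from the Carathéodory construction: one defines an outer measure $\mu^*$ on $A^{\Z}$ by
\[\mu^*(S)=\inf\left\{\sum_{n=1}^{\infty}\mu(C_n)\biggm| C_n\in\mathcal{C},\ S\subseteq\bigcup_{n=1}^{\infty}C_n\right\},\]
and the resulting restriction to the Borel sigma-algebra $\Sigma(\mathcal{C})$ coincides with $\mu$. I would invoke (or briefly recall) this standard fact as the starting point.

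Given $\epsilon>0$ and $S\in\Sigma(\mathcal{C})$, the infimum characterization lets me pick cylinders $C_1,C_2,\ldots\in\mathcal{C}$ with $S\subseteq\bigcup_n C_n$ and $\sum_n\mu(C_n)<\mu(S)+\epsilon$. I then set $U=\bigcup_n C_n$. Because $\mathcal{C}$ is a basis of the topology $\mathcal{T}$, each $C_n$ is itself open, so $U$ is open. By construction $S\subseteq U$, and countable subadditivity gives $\mu(U)\leq\sum_n\mu(C_n)<\mu(S)+\epsilon$. Since $\mu$ is a probability measure, $\mu(S)\leq 1<\infty$, so we may subtract to obtain $\mu(U\setminus S)=\mu(U)-\mu(S)<\epsilon$, as required.

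The only delicate point is the outer-measure representation in the first step: one must know that the outer measure may be computed using countable covers by cylinders rather than, say, by sets in the algebra they generate. This is routine, because finite disjoint unions of cylinders (which constitute that algebra) are themselves already countable unions of cylinders, so restricting to cylinder covers does not enlarge the infimum. The author sidesteps the issue by citing Theorem 2.18 in \cite{R}, which is exactly the outer regularity statement for finite Borel measures on a second-countable space whose topology is generated by the chosen class of "simple" sets, and the same shortcut seems cleanest here as well.
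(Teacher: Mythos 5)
Your argument is correct, but it is not the paper's route: the paper offers no proof at all for this lemma, simply observing that it is a special case of a standard regularity theorem and citing Theorem 2.18 of \cite{R}, which applies since $A^{\Z}$ is a compact metrizable space and $\mu$ a finite Borel measure on it. You instead prove outer regularity directly from the way $\mu$ is built: you use the Carath\'eodory outer-measure description of $\mu$ via countable cylinder covers, pick a cover with $\sum_n\mu(C_n)<\mu(S)+\epsilon$, and take $U$ to be its union, which is open because cylinders are basic open sets; the subtraction $\mu(U\setminus S)=\mu(U)-\mu(S)$ is legitimate since $\mu$ is finite. You also correctly flag and dispose of the only delicate point, namely that covers by elements of the algebra generated by $\mathcal{C}$ can be replaced by cylinder covers without changing the infimum, since each such element is a finite union of cylinders. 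What the paper's citation buys is brevity and independence from how $\mu$ was constructed; what your argument buys is self-containedness (it uses only the extension-theorem machinery already implicit in the paper's assertion that $\mu$ exists and is unique) and a slightly stronger conclusion, namely that $U$ can be taken to be a countable union of cylinders --- which is in fact the form in which the lemma is exploited in Lemma \ref{ergcover}, where the open set produced here is immediately re-expressed as a union of cylinders. The one step you should state explicitly rather than leave implicit is that the restriction of the cylinder-cover outer measure to $\Sigma(\mathcal{C})$ coincides with $\mu$ (the inequality $\mu(S)\leq\inf\sum_n\mu(C_n)$ is just countable subadditivity, but the reverse is exactly the content of the Carath\'eodory extension/uniqueness theorem), either by citing that theorem or by a short monotone-class argument.
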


\begin{lmm}\label{ergcover}If $F:A^\Z\to A^\Z$ is an ergodic CA, then for every $\epsilon>0$ there is a finite collection of cylinders $\{U_i\}_{i\in I}$ such that
$\mu(\bigcup_{i\in I} U_i)<\epsilon$ and
\[\left\{c\in A^\Z\mid F^t(c)\in\bigcup_{i\in I} U_i\text{ for some }t\in\N\right\}=A^\Z.\]
\end{lmm}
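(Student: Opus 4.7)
The plan is to combine ergodicity with compactness. I would pick a single small-measure cylinder $V$, invoke ergodicity to show that the set $W$ of configurations whose forward orbit never meets $V$ has measure zero, and then cover the compact set $W$ by finitely many additional cylinders of small total measure using outer regularity. Concretely, I would first fix a cylinder $V$ with $0<\mu(V)<\epsilon/2$; such a cylinder exists since $\mu(\cyl(w,i))=\abs{A}^{-\abs{w}}$ can be made arbitrarily small (the case $\abs{A}=1$ is trivial). Then I set $W=\bigcap_{t\in\N}F^{-t}(A^{\Z}\setminus V)$. Because $F$ is continuous and $V$ is open, each $F^{-t}(A^{\Z}\setminus V)$ is closed, so $W$ is closed and hence a compact subset of $A^{\Z}$.

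Next I would verify that $\mu(W)=0$. The key observation is that $W\subseteq F^{-1}(W)$, because if the forward orbit of $c$ avoids $V$, then so does the forward orbit of $F(c)$. Hence the sets $F^{-n}(W)$ form an increasing chain, and their union $\tilde W=\bigcup_{n\in\N}F^{-n}(W)$ satisfies $F^{-1}(\tilde W)=\tilde W$. By measure preservation, $\mu(F^{-n}(W))=\mu(W)$ for every $n$, so $\mu(\tilde W)=\mu(W)$. Ergodicity then forces $\mu(\tilde W)\in\{0,1\}$, and since $W\subseteq A^{\Z}\setminus V$ yields $\mu(W)\leq 1-\mu(V)<1$, we conclude $\mu(W)=0$.

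Finally I would apply Lemma \ref{cover} to obtain an open set $U\supseteq W$ with $\mu(U)<\epsilon/2$. Since cylinders form a basis of the topology, $U$ is a union of cylinders, and compactness of $W$ allows me to extract a finite subcover $U_1,\dots,U_n$ with $\mu(\bigcup_{i=1}^n U_i)\leq\mu(U)<\epsilon/2$. The collection $\{V,U_1,\dots,U_n\}$ then has total measure less than $\epsilon$. For any $c\in A^{\Z}$: if $c\notin W$ then $F^t(c)\in V$ for some $t\in\N$, while if $c\in W$ then $c=F^0(c)\in U_i$ for some $i$; in either case the forward orbit meets the union. The only slightly delicate point is the $F^{-1}$-invariance trick that upgrades the merely forward-invariant set $W$ to the fully $F^{-1}$-invariant set $\tilde W$ so that ergodicity becomes applicable; once that is in place, the rest is standard measure-theoretic bookkeeping.
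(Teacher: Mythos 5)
Your proof is correct and follows essentially the same route as the paper: fix one cylinder of measure less than $\epsilon/2$, show that the set of configurations whose forward orbit never enters it is compact and of measure zero, and then cover that set by finitely many cylinders of small total measure using Lemma \ref{cover} and compactness. The only difference is that where the paper cites Theorem 1.5 of Walters to get $\mu\bigl(\bigcup_{t\in\N}F^{-t}(V)\bigr)=1$, you derive this directly from the paper's definition of ergodicity by passing to the invariant set $\tilde W=\bigcup_{n\in\N}F^{-n}(W)$ — a self-contained substitution that works just as well.
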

\begin{proof}
Let $C\in\mathcal{C}$ be such that $0<\mu(C)<\epsilon/2$. By continuity of $F$, $B=\bigcup_{t\in\N}F^{-t}(C)$ is open and $\mu(B)=1$ by ergodicity of $F$ (see Theorem 1.5 in \cite{Wa}). Equivalently, $B'=A^{\Z}\setminus B$ is closed (and compact) and $\mu(B')=0$. Let $V$ be an open set such that $B'\subseteq V$ and $\mu(V)<\epsilon/2$: such a set exists by Lemma \ref{cover}. Because $\mathcal{C}$ is a basis of $\mathcal{T}$, there is a collection of cylinders $\{V_i\}_{i\in J}$ such that $V=\bigcup_{i\in J}V_i$. By compactness of $B'$ there is a finite set $I'\subseteq J$ such that $B'\subseteq \bigcup_{i\in I'}V_i$. Now $\{U_i\}_{i\in I}=\{C\}\cup \{V_i\}_{i\in I'}$ is a finite collection of cylinders such that $\mu(\bigcup_{i\in I} U_i)<\epsilon$ and
\[\left\{c\in A^\Z\mid F^t(c)\in\bigcup_{i\in I} U_i\text{ for some }t\in\N\right\}\supseteq B\cup\bigcup_{i\in I'}V_i\supseteq B\cup B'=A^\Z.\]
\end{proof}

To apply this lemma in our setup, we need to show that $F_{p,q}$ is ergodic for $p>q>1$. In fact, it turns out that a stronger result holds.

\begin{dfntn}A measure preserving CA function $F:A^\Z\to A^\Z$ is \emph{strongly mixing} if 
\[\lim_{t\to\infty}\mu(F^{-t}(U)\cap V)=\mu(U)\mu(V)\]
for every $U,V\in\Sigma(\mathcal{C})$.\end{dfntn}

We will prove that $F_{p,q}$ is strongly mixing. For the statement of the following lemmas, we define a function $\integ:A_{pq}^+\to\N$ by
\[\integ(w(1)w(2)\dots w(k))=\sum_{i=0}^{k-1}w(k-i)(pq)^i,\]
i.e. $\integ(w)$ is the integer having $w$ as a base $pq$ representation.

\begin{lmm}Let $w_1,w_2\in A_{pq}^k$ for some $k\geq 2$ and let $t>0$ be a natural number. Then
\begin{enumerate}
\item $\integ(w_1)<q^t \implies \integ(G_{p,q}(w_1))<q^{t-1}$ and
\item $\integ(w_2)\equiv \integ(w_1)+q^t\pmod{(pq)^k} \implies \integ(G_{p,q}(w_2))\equiv \integ(G_{p,q}(w_1))+q^{t-1} \pmod{(pq)^{k-1}}$.
\end{enumerate}
\end{lmm}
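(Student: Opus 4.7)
The plan is to prove both parts from a single arithmetic identity capturing how $G_{p,q}$ acts on $\integ$. Writing $w = w(1) w(2) \dots w(k)$ and using the $q$-adic decompositions $w(1) = w(1)_1 q + w(1)_0$ and $w(k) = w(k)_1 q + w(k)_0$ from the definition of $g_{p,q}$, I would obtain, by expanding
\[\integ(G_{p,q}(w)) = \sum_{i=1}^{k-1}\bigl(w(i)_0 p + w(i+1)_1\bigr)(pq)^{k-1-i}\]
and regrouping via $w(i) = w(i)_1 q + w(i)_0$, the identity
\[p \cdot \integ(w) = w(1)_1 (pq)^k + (pq) \cdot \integ(G_{p,q}(w)) + p \cdot w(k)_0.\]
Conceptually this is a finite-word version of Lemma \ref{vastaavuus}: the middle $k-1$ digits of the base-$pq$ expansion of $p \cdot \integ(w)$ are exactly the digits of $G_{p,q}(w)$, with $w(1)_1$ as the high-end overflow and $p \cdot w(k)_0$ as the low-end remainder.

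For claim (1), I would rewrite the identity as $(pq)\integ(G_{p,q}(w_1)) = p \integ(w_1) - w_1(1)_1 (pq)^k - p w_1(k)_0$ and split on whether $w_1(1)_1$ vanishes. If $w_1(1)_1 = 0$, then $(pq)\integ(G_{p,q}(w_1)) \leq p \integ(w_1) < pq^t$, so $\integ(G_{p,q}(w_1)) < q^{t-1}$. Otherwise $w_1(1)_1 \geq 1$, which forces $\integ(w_1) \geq q (pq)^{k-1}$ and hence $(pq)\integ(G_{p,q}(w_1)) \leq p \integ(w_1) - (pq)^k < pq^t - (pq)^k$, yielding $\integ(G_{p,q}(w_1)) < q^{t-1} - (pq)^{k-1} < q^{t-1}$.

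For claim (2), I would apply the identity to both $w_1$ and $w_2$, subtract, and write $N_i = \integ(w_i)$ to obtain
\[(pq)\bigl(\integ(G_{p,q}(w_2)) - \integ(G_{p,q}(w_1))\bigr) = p(N_2 - N_1) - (w_2(1)_1 - w_1(1)_1)(pq)^k - p(w_2(k)_0 - w_1(k)_0).\]
Reducing modulo $(pq)^k$, the middle term drops out and the hypothesis gives $p(N_2 - N_1) \equiv pq^t \pmod{(pq)^k}$. Further reducing the hypothesis modulo $pq$, then modulo $q$, forces (using $t \geq 1$) $w_2(k) \equiv w_1(k) \pmod{q}$, so $w_2(k)_0 = w_1(k)_0$ and the last term also vanishes. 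Dividing the resulting congruence through by $pq$ produces $\integ(G_{p,q}(w_2)) \equiv \integ(G_{p,q}(w_1)) + q^{t-1} \pmod{(pq)^{k-1}}$. The only delicate point I anticipate is the case split in claim (1), since the overflow digit $w_1(1)_1$ can be nonzero only when $t$ is large relative to $k$; everything else is algebraic bookkeeping anchored to the displayed identity.
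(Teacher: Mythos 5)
Your proof is correct, and your displayed identity $p\,\integ(w)=w(1)_1(pq)^k+(pq)\,\integ(G_{p,q}(w))+p\,w(k)_0$ does check out: the two boundary terms are precisely the overflow digit $g_{p,q}(0,w(1))=w(1)_1$ and the remainder digit $g_{p,q}(w(k),0)=p\,w(k)_0$. The route differs mildly from the paper's: there, $w$ is embedded into a configuration that vanishes outside the word, Lemma \ref{vastaavuus} gives $\real_{pq}(G_{p,q}(c))=p\,\integ(w)$, and from this one extracts the congruence $\integ(G_{p,q}(w))\equiv\lfloor\integ(w)/q\rfloor\pmod{(pq)^{k-1}}$ together with $\integ(G_{p,q}(w))<(pq)^{k-1}$; both claims then follow by floor-division arithmetic, writing $\integ(w_2)=\integ(w_1)+q^t+n(pq)^k$ for claim (2). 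You instead prove the same boundary bookkeeping directly on finite words via the exact identity (of which the paper keeps only the congruence and an upper bound, since the identity can be rewritten as $\integ(G_{p,q}(w))=\lfloor\integ(w)/q\rfloor-w(1)_1(pq)^{k-1}$). This makes your argument self-contained --- no appeal to Lemma \ref{vastaavuus} or to configurations --- at the cost of the case split on the overflow digit $w_1(1)_1$ in claim (1) and the extra observation $w_2(k)_0=w_1(k)_0$ in claim (2); the latter is correctly justified by reducing the hypothesis modulo $q$ (using $t\geq 1$), and your final division of the congruence by $pq$ is legitimate since $pq$ divides both sides and the modulus. Both proofs buy the same thing; yours is slightly more explicit, the paper's slightly shorter because it recycles its earlier lemma.
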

\begin{proof}
Let $c_i\in A_{pq}^\Z$ ($i=1,2$) be such that $c_i(-(k-1))c_i(-(k-1)+1)\dots c_i(0)=w_i$ and $c_i(j)=0$ for $j<-(k-1)$ and $j>0$. From this definition of $c_i$ it follows that $\integ(w_i)=\real_{pq}(c_i)$. Denote $e_i=G_{p,q}(c_i)$. We have
\[\sum_{j=-\infty}^{\infty}e_i(-j)(pq)^j=\real_{pq}(e_i)=p\real_{pq}(c_i)=p\integ(w_i)\]
and
\begin{flalign*}
\integ(G_{p,q}(w_i))&=\integ(e_i(-(k-1))\dots e_i(-1)) \\
&=\sum_{j=1}^{k-1}e_i(-j)(pq)^{j-1}\equiv\lfloor \integ(w_i)/q\rfloor\pmod{(pq)^{k-1}}.
\end{flalign*}
Also note that $\integ(G_{p,q}(w_i))<(pq)^{k-1}$.

For the proof of the first part, assume that $\integ(w_1)<q^t$. Combining this with the observations above yields $\integ(G_{p,q}(w_i))\leq\lfloor \integ(w_i)/q\rfloor<q^{t-1}$.

\begin{sloppypar}
For the proof of the second part, assume that $\integ(w_2)\equiv \integ(w_1)+q^t\pmod{(pq)^k}$. Then there exists $n\in\Z$ such that $\integ(w_2)=\integ(w_1)+q^t+n(pq)^k$ and
\end{sloppypar}
\begin{flalign*}
\integ(G_{p,q}(w_2))&\equiv \lfloor \integ(w_2)/q\rfloor\equiv\lfloor \integ(w_1)/q\rfloor+q^{t-1}+np(pq)^{k-1} \\
&\equiv\lfloor \integ(w_1)/q\rfloor+q^{t-1}\equiv \integ(G_{p,q}(w_1))+q^{t-1}\pmod{(pq)^{k-1}}.
\end{flalign*}
\end{proof}

\begin{lmm}\label{odometer}Let $t>0$ and $w_1,w_2\in A_{pq}^k$ for some $k\geq 2t+1$. Then
\begin{enumerate}
\item $\integ(w_1)<q^{2t} \implies \integ(F_{p,q}^t(w_1))=0$ and
\item $\integ(w_2)\equiv \integ(w_1)+q^{2t}\pmod{(pq)^k} \implies \integ(F_{p,q}^t(w_2))\equiv \integ(F_{p,q}^t(w_1))+1 \pmod{(pq)^{k-2t}}$.
\end{enumerate}
\end{lmm}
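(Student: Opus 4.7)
The plan is to reduce the statement to iterated application of the preceding lemma. The key observation is that on \emph{words}, $F_{p,q}$ coincides with $G_{p,q}^2$: for any word $w$ with $\abs{w}\geq 3$ the identity $f_{p,q}(a,b,c)=g_{p,q}(g_{p,q}(a,b),g_{p,q}(b,c))$ gives $F_{p,q}(w)(i)=G_{p,q}^2(w)(i)$ position by position, and inductively $F_{p,q}^t(w)=G_{p,q}^{2t}(w)$ whenever $\abs{w}\geq 2t+1$. So both parts of the lemma reduce to analogous claims about $2t$ iterations of $G_{p,q}$.

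I would then prove by induction on $j\in\{0,1,\dots,2t\}$ the two parallel strengthenings:
\begin{enumerate}
\item[(i)] $\integ(w_1)<q^{2t}\implies\integ(G_{p,q}^j(w_1))<q^{2t-j}$;
\item[(ii)] $\integ(w_2)\equiv\integ(w_1)+q^{2t}\pmod{(pq)^k}\implies\integ(G_{p,q}^j(w_2))\equiv\integ(G_{p,q}^j(w_1))+q^{2t-j}\pmod{(pq)^{k-j}}$.
\end{enumerate}
The base case $j=0$ is trivial. The inductive step is exactly one application of the preceding lemma with exponent $2t-(j-1)\geq 1$, and the hypothesis $k\geq 2t+1$ ensures that at each stage the word has length $k-(j-1)\geq 2$, so the preceding lemma is applicable throughout. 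Setting $j=2t$ and recalling $F_{p,q}^t(w)=G_{p,q}^{2t}(w)$ yields $\integ(F_{p,q}^t(w_1))<q^0=1$ (hence equal to $0$), and $\integ(F_{p,q}^t(w_2))\equiv\integ(F_{p,q}^t(w_1))+1\pmod{(pq)^{k-2t}}$, which are the two conclusions.

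I do not expect a serious obstacle here. The whole argument is essentially bookkeeping, with the only small point of care being the identification $F_{p,q}^t=G_{p,q}^{2t}$ on words, so that one can apply the preceding lemma $2t$ times in succession without having to manipulate $F_{p,q}$ directly.
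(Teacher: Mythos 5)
Your proposal is correct and follows essentially the same route as the paper: identify $F_{p,q}^t$ with $G_{p,q}^{2t}$ on words and then apply the preceding lemma $2t$ times, with the length hypothesis $k\geq 2t+1$ guaranteeing applicability at every stage. The paper states this more tersely ("both claims follow by repeated application of the previous lemma"), while you spell out the induction explicitly; the content is identical.
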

\begin{proof}
First note that $F_{p,q}(w)=G_{p,q}^2(w)$ for every $w\in A_{pq}^*$ such that $\abs{w}\geq 3$, because $F_{p,q}=\sigma_{A_{pq}}^{-1}\circ G_{p,q}\circ G_{p,q}$. Then both claims follow by repeated application of the previous lemma.
\end{proof}

The content of Lemma \ref{odometer} is as follows. Assume that $\{w_i\}_{i=0}^{(pq)^k-1}$ is the enumeration of all the words in $A_{pq}^k$ in the lexicographical order, meaning that $w_0=00\dots 00$, $w_1=00\dots 01$, $w_2=00\dots 02$ and so on. Then let $i$ run through all the integers between $0$ and $(pq)^k-1$. For the first $q^{2t}$ values of $i$ we have $F_{p,q}^t(w_i)=00\dots 00$, for the next $q^{2t}$ values of $i$ we have $F_{p,q}^t(w_i)=00\dots 01$, and for the following $q^{2t}$ values of $i$ we have $F_{p,q}^t(w_i)=00\dots 02$. Eventually, as $i$ is incremented from $q^{2t}(pq)^{k-2t}-1$ to $q^{2t}(pq)^{k-2t}$, the word $F_{p,q}^t(w_i)$ loops from $(pq-1)(pq-1)\dots (pq-1)(pq-1)$ back to $00\dots 00$.

\begin{thrm}If $p>q>1$, then $F_{p,q}$ is strongly mixing.\end{thrm}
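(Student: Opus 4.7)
My plan is to reduce the statement to cylinder sets and then exploit the explicit ``odometer'' structure from Lemma \ref{odometer} to compute the relevant measures directly. Since the collection $\mathcal{C}$ of cylinders generates $\Sigma(\mathcal{C})$ and finite unions of cylinders form an algebra, a standard measure-theoretic approximation reduces the claim to proving $\lim_{t\to\infty}\mu(F_{p,q}^{-t}(U)\cap V) = \mu(U)\mu(V)$ whenever $U=\cyl(u,a)$ and $V=\cyl(v,b)$ are cylinders. Writing $m=\abs{u}$, $n=\abs{v}$ and $k=m+2t$, for all $t$ large enough that $[b,b+n-1]\subseteq [a-t,a+m-1+t]$, both cylinders are determined by the length-$k$ subword of $c$ occupying coordinates $a-t,\dots,a+m-1+t$, so $\mu(F_{p,q}^{-t}(U)\cap V)=(pq)^{-k}N_t$, where $N_t$ counts the words $w\in A_{pq}^k$ simultaneously satisfying $F_{p,q}^t(w)=u$ and $w(j)\dots w(j+n-1)=v$ with $j=b-a+t+1$.

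Next I would translate $N_t$ into an arithmetic counting problem via Lemma \ref{odometer}. Under the bijection $w\leftrightarrow N=\integ(w)\in\{0,\dots,(pq)^k-1\}$, the preimage set $\{w:F_{p,q}^t(w)=u\}$ is identified with $S_u=\bigcup_{\ell=0}^{p^{2t}-1}[\alpha_\ell,\alpha_\ell+q^{2t})$, where $\alpha_\ell=q^{2t}(\integ(u)+\ell(pq)^m)$, i.e.\ a union of $p^{2t}$ intervals of length $q^{2t}$ whose left endpoints form an arithmetic progression with common difference $p^mq^{m+2t}$. Constraining $v$ inside $w$ at position $j$ fixes $n$ base-$pq$ digits of $N$ and therefore carves out a set $\text{FixV}$ that is a union of $(pq)^{j-1}$ intervals, each of length $(pq)^{k-j-n+1}$. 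Thus $N_t=\abs{S_u\cap \text{FixV}}$, and the goal becomes to establish $\abs{S_u\cap \text{FixV}}=(pq)^{2t-n}(1+o(1))$.

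The key geometric observation is that for $t$ large the $\text{FixV}$ intervals vastly exceed the $S_u$ intervals in length: their length ratio equals $(pq)^{m+a-b-n}(p/q)^t$, which tends to infinity. Hence every $S_u$ interval is either entirely inside $\text{FixV}$ (contributing $q^{2t}$), entirely outside (contributing $0$), or straddles a $\text{FixV}$ boundary (contributing something in between). Because the $\alpha_\ell$ form an arithmetic progression, the number of $\alpha_\ell$ in any interval of length $L$ equals $L/(p^mq^{m+2t})+O(1)$; summing this estimate over the $(pq)^{j-1}$ intervals of $\text{FixV}$ gives that the number of $\alpha_\ell$ lying in $\text{FixV}$ is $(pq)^{-n}p^{2t}+O((pq)^{j-1})$, while at most $O((pq)^{j-1-m})$ of them lie within $q^{2t}$ of any boundary. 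Assembling these bounds yields $\abs{S_u\cap \text{FixV}}=(pq)^{2t-n}+O(q^{2t}(pq)^{j-1})$; dividing by $(pq)^{2t-n}$ shows the relative error is $O((pq)^{b-a+n}(q/p)^t)$, which vanishes precisely because $p>q$.

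I expect the main obstacle to be the combinatorial bookkeeping of the two interlocking arithmetic progressions and the careful tracking of exponents that forces the hypothesis $p>q$ to enter exactly where needed for the error to decay. Once this estimate is in hand, dividing by $(pq)^k$ gives $\mu(F_{p,q}^{-t}(U)\cap V)\to (pq)^{-m-n}=\mu(U)\mu(V)$, completing the proof of strong mixing.
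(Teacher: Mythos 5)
Your proposal is correct and takes essentially the same route as the paper: both reduce strong mixing to cylinders and then use Lemma \ref{odometer} (the odometer structure of $F_{p,q}^t$ on words of length $2t+\abs{u}$) to count preimage words containing a prescribed subword, with an error of order $q^{2t}$ per block that is negligible against $(pq)^t$ precisely because $p>q$; the paper merely organizes the count more simply, fixing the prefix and the middle block so that the remaining freedom is a single interval of consecutive integers rather than your intersection $S_u\cap\text{FixV}$. Two small touch-ups: the definition of strongly mixing (and the standard reduction to cylinders) presupposes that $F_{p,q}$ is measure preserving, which the paper dispatches in one line via surjectivity and Hedlund's theorem (it also follows from your own observation that $\abs{S_u}=(pq)^{2t}$ for every $u$), and your count of the $\alpha_\ell$ lying within $q^{2t}$ of a boundary of $\text{FixV}$ should read $O((pq)^{j-1})$ rather than $O((pq)^{j-1-m})$ --- a harmless slip, since the assembled error bound $O(q^{2t}(pq)^{j-1})$ you actually use is exactly what the corrected count gives.
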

\begin{proof} Firstly, because $F_{p,q}$ is surjective, the fact that $F_{p,q}$ is measure preserving follows from Theorem 5.4 in \cite{H}. Then, by Theorem 1.17 in \cite{Wa} it is sufficient to verify the condition
\[\lim_{t\to\infty}\mu(F_{p,q}^{-t}(C_1)\cap C_2)=\mu(C_1)\mu(C_2)\]
for every $C_1,C_2\in\mathcal{C}$. Without loss of generality we may consider cylinders $C_1=\cyl(v_1,0)$ and $C_2=\cyl(v_2,i)$. Denote $l_1=\abs{v_1}$, $l_2=\abs{v_2}$ and let $t\geq i+l_2$ be a natural number.

Consider an arbitrary word $w\in A_{pq}^{2t+l_1}$ and its decomposition $w=w_1w_2w_3$, where $w_1\in A_{pq}^{t+i}$, $w_2\in A_{pq}^{l_2}$ and $w_3\in A_{pq}^{t+l_1-i-l_2}$. The following conditions may or may not be satisfied by $w$ (see Figure \ref{sm}):
\begin{enumerate}
\item $F_{p,q}^t(w)=v_1$
\item $w_2=v_2$.
\end{enumerate}
\begin{figure}[h]
\centering
\begin{tikzpicture}
\draw (0pt,100pt) -- (240pt,100pt);
\draw (0pt,100pt) -- (0pt,80pt); \node at (50pt,87pt) {$w_1$}; \draw (100pt,100pt) -- (100pt,80pt); \node at (125pt,90pt) {$w_2\overset{?}{=}v_2$}; \draw (150pt,100pt) -- (150pt,80pt); \node at (195pt,87pt) {$w_3$}; \draw (240pt,100pt) -- (240pt,80pt);
\draw (0pt,80pt) -- (240pt,80pt);

\draw (80pt,20pt) -- (160pt,20pt);
\draw (80pt,20pt) -- (80pt,0pt); \node at (120pt,10pt) {$F_{p,q}^t(w)\overset{?}{=}v_1$}; \draw (160pt,20pt) -- (160pt,0pt);
\draw (80pt,0pt) -- (160pt,0pt);

\draw[dashed] (0pt,80pt) -- (80pt,0pt);
\draw[dashed] (240pt,80pt) -- (160pt,0pt);

\draw[dashed] (80pt,80pt) -- (80pt,20pt);
\draw[dashed] (160pt,80pt) -- (160pt,20pt);

\draw[decorate,decoration={brace,mirror,amplitude=5pt}] (0pt,80pt) -- (80pt,80pt); \node at (40pt,70pt){$t$};
\draw[decorate,decoration={brace,mirror,amplitude=5pt}] (100pt,80pt) -- (150pt,80pt); \node at (125pt,70pt){$l_2$};
\draw[decorate,decoration={brace,mirror,amplitude=5pt}] (160pt,80pt) -- (240pt,80pt); \node at (200pt,70pt){$t$};
\draw[decorate,decoration={brace,amplitude=5pt}] (80pt,20pt) -- (160pt,20pt); \node at (120pt,30pt){$l_1$};

\draw (80pt,100pt) -- (80pt,105pt); \node at (80pt,110pt) {$0$};
\draw (100pt,100pt) -- (100pt,105pt); \node at (100pt,110pt) {$i$};

\draw[->] (245pt,90pt) .. controls (250pt,50pt) .. (245pt,10pt); \node at (260pt,50pt) {$F_{p,q}^t$};

\end{tikzpicture}
\caption{Relations between the words $v_1$, $v_2$ and $w_1w_2w_3$.}
\label{sm}
\end{figure}
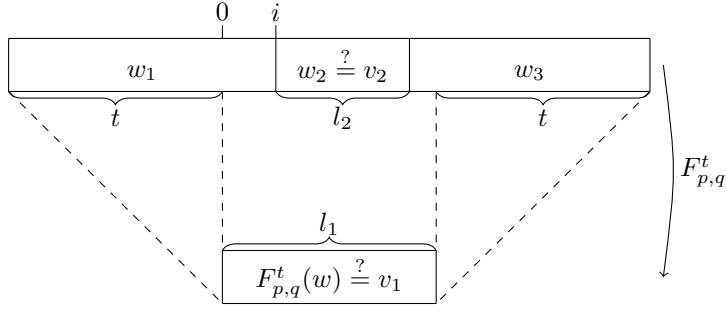
Note that if $w$ satisfies condition $(1)$, then $F_{p,q}^t(\cyl(w,-t))\subseteq C_1$, and otherwise $F_{p,q}^t(\cyl(w,-t))\cap C_1=\emptyset$. Also, if $w$ satisfies condition $(2)$, then $\cyl(w,-t)\subseteq C_2$, and otherwise $\cyl(w,-t)\cap C_2=\emptyset$. Let $W_t\subseteq A_{pq}^{2t+l_1}$ be the collection of those words $w$ that satisfy both conditions. It follows that

\[\mu(F_{p,q}^{-t}(C_1)\cap C_2)=\mu\left(\bigcup_{w\in W_t}\cyl(w,-t)\right)=\abs{W_t}(pq)^{-(2t+l_1)}.\]

\begin{sloppypar}
Next, we estimate the number of words $w=w_1w_2w_3$ in $W_t$. In any case, to satisfy condition $(2)$, $w_2$ must equal $v_2$. Then, for any of the $(pq)^{t+i}$ choices of $w_1$, the number of choices for $w_3$ that satisfy condition $(1)$ is between $(pq)^{t+l_1-i-l_2}/(pq)^{l_1}-q^{2t}$ and $(pq)^{t+l_1-i-l_2}/(pq)^{l_1}+q^{2t}$ by Lemma \ref{odometer} (and the paragraph following it). Thus,
\end{sloppypar}
\begin{flalign*}
&\left((pq)^{t-i-l_2}-q^{2t}\right)(pq)^{t+i}(pq)^{-(2t+l_1)}\leq \mu(F_{p,q}^{-t}(C_1)\cap C_2) \\
\leq& \left((pq)^{t-i-l_2}+q^{2t}\right)(pq)^{t+i}(pq)^{-(2t+l_1)},
\end{flalign*}
and as $t$ tends to infinity,
\[\lim_{t\to\infty}\mu(F_{p,q}^{-t}(C_1)\cap C_2)=(pq)^{-l_1-l_2}=\mu(C_1)\mu(C_2).\]
\end{proof}

\begin{thrm}\label{thm2}If $p>q>1$ and $\epsilon>0$, then there exists a finite union of intervals $K_{p,q,\epsilon}$ of total length at least $1-\epsilon$ such that $Z_{p/q}(K_{p,q,\epsilon})=\emptyset$.\end{thrm}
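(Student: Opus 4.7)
The plan is to combine the strong mixing of $F_{p,q}$ (established in the preceding theorem, which in particular implies ergodicity) with Lemma~\ref{ergcover} to produce a finite union of cylinders that every forward $F_{p,q}$-orbit must eventually visit, and then to convert this cylinder cover into a finite union of intervals in $[0,1)$ whose complement will serve as $K_{p,q,\epsilon}$.

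Concretely, given $\epsilon>0$, I would apply Lemma~\ref{ergcover} to $F_{p,q}$ with a sufficiently small parameter $\epsilon'>0$ (to be fixed in terms of $\epsilon$) to obtain finitely many cylinders $U_i=\cyl(w_i,j_i)$ with $\mu(\bigcup_i U_i)<\epsilon'$ such that every $c\in A_{pq}^{\Z}$ satisfies $F_{p,q}^t(c)\in\bigcup_i U_i$ for some $t\in\N$. Applied to $c=\config_{pq}(\xi)$ for an arbitrary $\xi>0$, this yields $\config_{pq}((p/q)^t\xi)\in U_i$ for some $t$ and $i$. By decomposing each cylinder as $U_i=\bigsqcup_u \cyl(w_iu,j_i)$ with $u$ ranging over extensions long enough to reach position~$1$, one may assume $j_i+|w_i|-1\geq 1$ for every $i$, so that $\config_{pq}(\eta)\in U_i$ forces $\{\eta\}$ to lie in a specific finite union of intervals $I_i\subseteq[0,1)$ determined by the positive-position part of $w_i$. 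Setting $S=\bigcup_i I_i$ and $K_{p,q,\epsilon}=[0,1)\setminus S$, any $\xi\in Z_{p/q}(K_{p,q,\epsilon})$ would have $\{(p/q)^t\xi\}\notin S$ for every $t\in\N$, contradicting the orbit-visiting property extracted above; thus $Z_{p/q}(K_{p,q,\epsilon})=\emptyset$.

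The main obstacle is arranging the total length $|S|$ to be at most $\epsilon$. A cylinder $U_i$ with $j_i\geq 1$ contributes to $S$ a set of length exactly $\mu(U_i)$, but one with $j_i\leq 0$ contributes an inflated length $\mu(U_i)\cdot (pq)^{1-j_i}$, because only the part of $U_i$ lying at positive positions constrains $\{\eta\}$. I would resolve this by inspecting the proof of Lemma~\ref{ergcover} carefully: taking the initial cylinder $C=\cyl(w_0,1)$ of measure less than $\epsilon/2$ at position $1$, and covering the closed null set $B'$ from a basis of cylinders $\cyl(w,j)$ satisfying $j+|w|-1\geq 1$ (which is still a basis for the topology on $A_{pq}^{\Z}$, since any cylinder can be refined to meet this condition). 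The resulting finite subcover has a well-defined minimum position $j^*$, and then choosing $\epsilon'\leq\epsilon\,(pq)^{j^*-1}$ guarantees $|S|<\epsilon$, completing the argument.
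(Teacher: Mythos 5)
Your overall strategy is the same as the paper's: deduce ergodicity from strong mixing, invoke Lemma \ref{ergcover} to obtain a finite union of cylinders of measure less than $\epsilon$ that every forward $F_{p,q}$-orbit visits, translate those cylinders into a finite union of intervals $S\subseteq[0,1)$, and set $K_{p,q,\epsilon}=[0,1)\setminus S$. You also correctly isolate the one technical obstacle: a cylinder $\cyl(w,j)$ with $j\leq 0$ constrains the fractional part only through its digits at positions $\geq 1$, so it contributes to $S$ a set of length $\mu(\cyl(w,j))\,(pq)^{1-j}$ rather than $\mu(\cyl(w,j))$.

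However, your resolution of that obstacle is circular, and this is a genuine gap. The minimum base position $j^*$ is read off from the finite subcover of $B'$, which is extracted from an open set $V\supseteq B'$ chosen with measure less than (roughly) $\epsilon'$; so $j^*$ only becomes available after $\epsilon'$ has been fixed, yet you fix $\epsilon'$ by the condition $\epsilon'\leq\epsilon\,(pq)^{j^*-1}$. Nothing bounds $j^*$ in advance, and shrinking $\epsilon'$ does not by itself shrink the projected lengths: a cylinder whose rightmost constrained position is $r\geq 1$ projects onto an interval of length $(pq)^{-r}$ no matter how far its word extends to the left, so a cover of arbitrarily small measure can still project onto a large portion of $[0,1)$. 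The missing idea --- and the justification behind the paper's terse ``without loss of generality $U_i=\cyl(w_i,1)$ with $w_i\in A_{pq}^k$'' --- is shift-invariance: $\mu$ is invariant under $\sigma_{A_{pq}}$ and $F_{p,q}$ commutes with the shift, so after obtaining the cylinders from Lemma \ref{ergcover} with parameter $\epsilon$ itself, one may replace every $U_i$ by $\sigma_{A_{pq}}^{m}(U_i)$ for a single common exponent $m$ (e.g.\ $m=j^*-1$); this preserves both $\mu\bigl(\bigcup_i U_i\bigr)<\epsilon$ and the property that every configuration's forward orbit meets the union, while moving all constrained positions into $\{1,2,\dots\}$. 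Refining the shifted cylinders to a common base $\cyl(w_i,1)$ with $w_i\in A_{pq}^k$ then makes the total interval length equal to the measure of the union, with no inflation and no tuning of an auxiliary parameter. With that normalization in place, the rest of your argument goes through and coincides with the paper's proof.
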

\begin{proof}
The previous theorem implies that $F_{p,q}$ is ergodic: if $S\in \Sigma(\mathcal{C})$ is such that $F_{p,q}^{-1}(S)=S$, then
\[\mu(S)=\lim_{t\to\infty}\mu(F_{p,q}^{-t}(S)\cap S)=\mu(S)\mu(S),\]
which means that $\mu(S)=0$ or $\mu(S)=1$.

Since $F_{p,q}$ is ergodic, by Lemma \ref{ergcover} there is a finite collection of cylinders $\{U_i\}_{i\in I}$ such that
$\mu(\bigcup_{i\in I} U_i)<\epsilon$ and
\[\left\{c\in A_{pq}^\Z\mid F_{p,q}^t(c)\in\bigcup_{i\in I} U_i\text{ for some }t\in\N\right\}=A_{pq}^\Z.\]
Without loss of generality we may assume that for every $i\in I$, $U_i=\cyl(w_i,1)$ and $w_i\in A_{pq}^k$ for a fixed $k>0$. Consider the collection of words $W=A_{pq}^k\setminus\{w_i\}_{i\in I}$ and define
\[K_{p,q,\epsilon}=\bigcup_{v\in W}\left[\real_{pq}(v),\real_{pq}(v)+(pq)^{-k}\right).\]
The set $K_{p,q,\epsilon}$ has total length
\[\frac{\abs{W}}{(pq)^k}=1-\frac{\abs{I}}{(pq)^k}=1-\mu\left(\bigcup_{i\in I} U_i\right)\geq 1-\epsilon.\]
Now let $\xi>0$ be arbitrary and denote $c=\config_{pq}(\xi)$. There exists a $t\in\N$ such that $F_{p,q}^t(c)\in\bigcup_{i\in I} U_i$, and equivalently, $F_{p,q}^t(c)\notin\bigcup_{v\in W}(\cyl(v,1))$. This means that $\{\xi(p/q)^t\}\notin K_{p,q,\epsilon}$, and therefore $Z_{p/q}(K_{p,q,\epsilon})=\emptyset$.
\end{proof}

\section{Conclusions}
We have shown in Theorem \ref{thm1} and Corollary \ref{cor} that for $p>q>1$ and $\epsilon>0$ there exists a finite union of intervals $J_{p,q,\epsilon}$ of total length at most $\epsilon$ such that $Z_{p/q}(J_{p,q,\epsilon})\neq\emptyset$. Moreover, by following the proof of this result, it is possible (at least in principle) to explicitly construct the set $J_{p,q,\epsilon}$ for any given $\epsilon$. We have also shown in Theorem \ref{thm2} that for $p>q>1$ and $\epsilon>0$ there exists a finite union of intervals $K_{p,q,\epsilon}$ of total length at least $1-\epsilon$ such that $Z_{p/q}(K_{p,q,\epsilon})=\emptyset$. The proof of this theorem is non-constructive.

\begin{prblm}Assume that $p>q>1$. Is it possible to construct explicitly for every $\epsilon>0$ a finite union of intervals $S$ such that the total length of $S$ is at least $1-\epsilon$ and $Z_{p/q}(S)=\emptyset$?\end{prblm}

\end{document}